\patchcmd{\@settitle}{\uppercasenonmath\@title}{\scshape\large}{}{}
\patchcmd{\@setauthors}{\MakeUppercase}{\scshape\normalsize}{}{}
\newcommand{\comment}[1]{}
\newcommand{\abbr}[1][abbrev]{#1.\xspace}
\newcommand{\eg}{\abbr[e.g]}
\newcommand{\ie}{\abbr[i.e]}
\newcommand{\Wlog}{\abbr[w.l.o.g]}
\newcommand{\wrt}{\abbr[w.r.t]}
\newcommand{\field}{\mathbb}
\newcommand{\reals}{\field{R}}
\newcommand{\R}{\reals}
\newtheorem{lemma}{Lemma}
\newtheorem{remark}{Remark}
\newtheorem{theorem}{Theorem}
\newtheorem{corollary}{Corollary}
\newtheorem{proposition}{Proposition}
\newtheorem{observation}{Observation}
\newcommand{\nodes}{V}
\newcommand{\leaves}{L}
\newcommand{\node}{v}
\newcommand{\othernode}{u}
\newcommand{\parent}{p}
\newcommand{\sibling}{s}
\newcommand{\ancestors}{A}
\newcommand{\st}{\text{s.t.}}
\newcommand{\fcdot}{\,\cdot\,}
\newcommand{\fcarg}[1]{\def\fc@rg{#1}\ifx\fc@rg\empty\fcdot\else\fc@rg\fi}
\newcommand{\abs}[1]{\lvert\fcarg{#1}\rvert}
\newcommand{\card}{\abs}
\newcommand{\norm}[2][]{\lVert\fcarg{#2}\rVert\ifx#1\empty\else_{#1}\fi}
\newcommand{\Norm}[2][]{\left\lVert#2\right\rVert\ifx#1\empty\else_{#1}\fi}
\newcommand{\snorm}[2][]{\lvert\!\lvert\!\lvert
  \fcarg{#1}\rvert\!\rvert\!\rvert\ifx#2\empty\else_{#1}\fi}
\newcommand{\Snorm}[2][]{\left\lvert\!\left\lvert\!\left\lvert
  #2\right\rvert\!\right\rvert\!\right\rvert\ifx#1\empty\else_{#1}\fi}
\newcommand{\sprod}[3][]{%
  \langle\fcarg{#2},\fcarg{#3}\rangle\ifx#1\empty\else_{#1}\fi}
\newcommand{\Sprod}[3][]{%
  \left\langle\fcarg{#2},\fcarg{#3}\right\rangle\ifx#1\empty\else_{#1}\fi}
\newcommand{\optmathindex}[1]{\ifx#1\empty\else,#1\fi}
\newcommand{\opttextindex}[1]{\ifx#1\empty\else,\text{#1}\fi}
\newcommand{\optmathsb}[1]{\ifx#1\empty\else_{#1}\fi}
\newcommand{\opttextsb}[1]{\ifx#1\empty\else_{\text{#1}}\fi}
\newcommand{\optmathsp}[1]{\ifx#1\empty\else^{#1}\fi}
\newcommand{\opttextsp}[1]{\ifx#1\empty\else^{\text{#1}}\fi}
\newcommand{\continuousFunctions}[1]{\mathcal{C}\ifx#1\empty\else^{#1}\fi}
\newcommand{\piecewiseContinuousFunctions}[1]{\mathcal{C}_p\ifx#1\empty\else^{#1}\fi}
\newcommand{\define}{\mathrel{{\mathop:}{=}}}
\newcommand{\objvect}{c}
\newcommand{\decvar}{x}
\newcommand{\appdecvar}{\bar{\decvar}}
\newcommand{\decvarsize}{n}
\newcommand{\binvarsize}{\decvarsize_b}
\newcommand{\consmat}{A}
\newcommand{\conslhs}{b}
\newcommand{\varindex}{i}
\newcommand{\varset}{I}
\newcommand{\binindex}{i}
\newcommand{\feasset}{F}
\newcommand{\lpfeasset}{\hat{F}}
\newcommand{\tree}{G}
\newcommand{\edges}{E}
\newcommand{\lpval}{l}
\newcommand{\onebranchset}{\mathbb{1}}
\newcommand{\zerobranchset}{\mathbb{0}}
\newcommand{\decvarset}{\mathcal{X}}
\newcommand{\permu}{\phi}
\newcommand{\appset}{O}
\newcommand{\appsetLP}{\hat{\appset}}
\newcommand{\bal}{1}
\newcommand{\disjouterapprox}{\appset_\bal}
\newcommand{\bset}{B}
\newcommand{\bsetLP}{\hat{B}}
\newcommand{\lin}{\text{lin}}
\newcommand{\tight}{\text{tight}}
\newcommand{\msset}{H}
\newcommand{\indvar}{z}
\newcommand{\minlp}{\mu}
\newcommand{\binaryset}{I_{\text{B}}}
\newcommand{\contset}{I_{\text{C}}}
\newcommand{\proj}{\text{proj}}
\newcommand{\BB}{BB\xspace}
\newcommand{\conv}{\text{conv}}
\newcommand{\ints}{\text{ext}}
\newcommand{\child}{c}
\newcommand{\rootn}{r}
\newcommand{\nmsset}{M}
\newcommand{\balasind}{1}
\newcommand{\treeind}{2}
\newcommand{\msind}{3}
\newcommand{\onelbset}{\nodes_{1}}
\newcommand{\zerolbset}{\nodes_{0}}
\newcommand{\PTwoLP}{\appsetLP_{\treeind, \lin}}
\newcommand{\PTwoMsetLP}{\hat{\msset}}
\newcommand{\PTwoBsetLP}{\hat{\bset}_{\tight}}
\newcommand{\decvarsetLP}{\hat{\mathcal{X}}}
\newcommand{\STI}{\text{STI}}
\newcommand{\viparam}{\Delta}
\newcommand{\CGLP}{CGLP\xspace}
\newcommand{\genset}{P}
\newcommand{\genmat}{W}
\newcommand{\genrhs}{h}
\newcommand{\gennbcons}{r}
\newcommand{\atom}{\mathcal{A}}
\newcommand{\atommat}{D}
\newcommand{\atomrhs}{f}
\newcommand{\atomconssize}{k}
\newcommand{\indvarsize}{q}
\newcommand{\cglpvar}{u}
\newcommand{\cglpmult}{\pi}
\newcommand{\normfun}{f}
\newcommand{\order}{\mathcal{O}}
\newcommand{\modatom}{\bar{\atom}}
\newcommand{\nmssetLP}{\hat{\nmsset}}
\newcommand{\clos}{\mathcal{C}}
\newcommand{\MBP}{MBP\xspace}
\newcommand{\MBPs}{MBPs\xspace}
\newcommand{\treeinf}{\mathcal{G}}
\newcommand{\Sep}{\text{sep}}
\newcommand{\lowerb}{\underline{b}}
\newcommand{\upperb}{\overline{b}}
\newcommand{\ProblemFont}[1]{\texttt{#1}}
\newcommand{\CutFont}[1]{\texttt{#1}}
\newcommand{\Mkp}{\ProblemFont{MKP}}
\newcommand{\Scp}{\ProblemFont{SCP}}
\newcommand{\CutObj}{\CutFont{Obj}}
\newcommand{\CutTree}{$\proj_\decvar(\appsetLP_{\treeind, \lin})$}
\newcommand{\CutStar}{$\clos (\treeinf)$}
\newcommand{\CutBalas}{$\proj_\decvar(\appset_{\balasind, \lin})$}
\pgfplotsset{compat=1.16}
\begin{document}

\title[The Branch-and-Bound Tree Closure]{The Branch-and-Bound Tree Closure}

\author[M. Roland, N. Sugishita, A. Forel, Y. Emine, R. Fukasawa, T. Vidal]%
{Marius Roland, Nagisa Sugishita, Alexandre Forel,\\
   Youssouf Emine, Ricardo Fukasawa, Thibaut Vidal}

\address[Marius Roland]{%
   (A) Univ. Lille,
   Inria,
   CNRS,
   Centrale Lille,
   UMR 9189 CRIStAL,
   Lille, F-59000,
   France;
   (B) Polytechnique Montréal,
   André-Aisenstadt Pavillon,
   2920 Tour Road, Montreal,
   Quebec H3T 1N8, Canada
 }
\email{mmmroland@gmail.com}

\address[Nagisa Sugishita]{Université de Montréal, André-Aisenstadt Pavillon, 2920 Tour Road, Montreal, Quebec H3T 1N8, Canada}
\email{nagisa.sugishita@umontreal.ca}

\address[Alexandre Forel]{Polytechnique Montréal, André-Aisenstadt Pavillon, 2920 Tour Road, Montreal, Quebec H3T 1N8, Canada\\\emph{The work was conducted prior to joining Amazon.}}
\email{alex.forel@gmail.com}

\address[Youssouf Emine]{Polytechnique Montréal, André-Aisenstadt Pavillon, 2920 Tour Road, Montreal, Quebec H3T 1N8, Canada}
\email{youssouf.emine@gmail.com}

\address[Ricardo Fukasawa]{University of Waterloo, 200 University Avenue West, Waterloo, Ontario N2L3G1, Canada}
\email{rfukasawa@uwaterloo.ca}

\address[Thibaut Vidal]{Polytechnique Montréal, André-Aisenstadt Pavillon, 2920 Tour Road, Montreal, Quebec H3T 1N8, Canada}
\email{thibaut.vidal@polymtl.ca}

\date{\today}

\begin{abstract}

  This paper investigates the a-posteriori analysis of Branch-and-Bound~(\BB) trees to extract structural information about the feasible region of mixed-binary linear programs.  We introduce three novel outer approximations of the feasible region, systematically constructed from a \BB tree. These are: a tight formulation based on disjunctive programming, a branching-based formulation derived from the tree's branching logic, and a mixing-set formulation derived from the on-off properties inside the tree. We establish an inclusion hierarchy, which ranks the approximations by their theoretical strength \wrt to the original feasible region. The analysis is extended to the generation of valid inequalities, revealing a separation-time hierarchy that mirrors the inclusion hierarchy in reverse. This highlights a trade-off between the tightness of an approximation and the computational cost of generating cuts from it. Motivated by the computational expense of the stronger approximations, we introduce a new family of valid inequalities called star tree inequalities. Although their closure forms the weakest of the proposed approximations, their practical appeal lies in an efficient, polynomial-time combinatorial separation algorithm. A computational study on multi-dimensional knapsack and set-covering problems empirically validates the theoretical findings. Moreover, these experiments confirm that computationally useful valid inequalities can be generated from \BB trees obtained by solving optimization problems considered in practice.
\end{abstract}

\keywords{}
\subjclass[2020]{}


\maketitle

\section{Introduction}
\label{sec:introduction}

The Branch-and-Bound~(\BB) method proposed by~\textcite{c317b127-1fe2-3862-8218-ad5f3daf32e7}, has transformed mixed-integer linear optimization. This central role has motivated extensive research on the design and analysis of branch-and-bound and its many components, notably branching rules, valid inequalities, and primal heuristics. Over time, additional features such as presolve, symmetry handling, parallelization, and restarts have become standard. Each of these elements continues to be the focus of significant attention~\cite{clautiaux2025last}.

Despite their practical effectiveness, \BB algorithms are difficult to parameterize. The design space is vast: each component admits many alternatives, and their interactions are subtle. As a result, the parameter choices implemented in state-of-the-art solvers rely heavily on expert judgment and extensive empirical tuning~\cite{bolusani2024scip}. This reliance on heuristic selection sits uneasily with the prescriptive spirit of optimization and has motivated a recent body of work that seeks a rigorous, mathematical understanding of branch-and-bound. Recent research seeks to address fundamental questions, such as identifying classes of instances that are solvable efficiently using BB~\cite{dey2023lower}, and analyzing the circumstances under which specific algorithmic components are most or least effective~\cite{dey2024theoretical}.

The present work contributes to this new perspective but approaches it from an \enquote{a-posteriori} angle. We study the following question: given a mixed-binary linear program, what information about its feasible region can be recovered from a branch-and-bound tree (that may or may not certify optimality)? To address this question, we study how to derive valid inequalities and construct outer approximations of the feasible region using only the information contained in a BB tree.

Beyond its theoretical contributions, this work is motivated by a recurring computational pattern in mathematical optimization: the need to solve sequences of closely related mixed-binary programs. For instance, decomposition methods, such as column generation~\cite{Gamrath2015, Witzig2014reoptimization} and Lagrangian decomposition, repeatedly solve subproblems that differ only in their objective coefficients. Similarly, restart strategies within MIP solvers~\cite{contardo2023cutting} halt and restart the search, creating an opportunity to carry over knowledge from one \BB tree to the next. Bilevel optimization algorithms often involve an outer loop that repeatedly queries an optimal solution to a follower's MIP under varying parameters. The pattern also appears in application-specific contexts, where models for energy systems~\cite{van2018large}, scheduling~\cite{artigues2013resource}, or vehicle routing~\cite{Ahmed2024} are re-solved with updated data, and in modern machine learning pipelines, where differentiating through an optimization model requires solving many perturbed instances~\cite{Sadana2023survey, Berthet2020, Dalle2022, Elmachtoub2022}. In all these settings, the ability to extract and reuse structural information from a \BB tree, such as the outer approximations we develop, can significantly reduce the cost of subsequent solves.

Because our primary motivations are computational, we emphasize scalable constructions. In particular, we quantify the size of each proposed outer approximation, in terms of variables and constraints, as a function of the number of leaves in the \BB tree. For each construction, we also analyze the computational complexity and, where relevant, the formulation size required to separate valid inequalities for the proposed outer approximations.

\subsection{Contributions.}

\begin{enumerate}
    \item We introduce three novel outer approximations of the feasible region of mixed-binary programs that can be systematically constructed from a \BB tree.  The first approximation is derived from disjunctive programming~\cite{balas2018disjunctive}, the second is related to binary polynomial optimization~\cite{elloumi2023efficient}, and the third is based on the mixing-set~\cite{gunluk2001mixing,atamturk2000mixed}. For each approximation, we propose a linear extended formulation.
    
    \item We establish a strict inclusion hierarchy among them, demonstrating that they form a sequence of increasingly weaker approximations.
    
    \item We develop separation procedures for each outer approximation by projecting its extended formulation onto the original space of variables. In particular, we adapt the cut-generating linear program framework~\cite{balas1996mixed,balas1993lift} to the extended formulations. We analyze the size of each extended formulation in terms of the number of leaf nodes in the \BB tree and the number of decision variables. This analysis reveals a trade-off: the computational effort required for separation is inversely related to the tightness of the approximation. This trade-off justifies the study of each formulation, including those that are comparatively weaker.
    
    \item To provide a computationally cheaper alternative to solving a linear separation program, we introduce a new class of valid inequalities, which we call \emph{star tree inequalities}. We prove the validity of these inequalities and present a combinatorial, polynomial-time algorithm for the corresponding separation problem. We show that the closure of the star tree inequalities contains the disjunctive- and branching-based outer approximations.
    
    \item We conduct a computational study on multi-dimensional knapsack and set-covering problems instances to evaluate the practical effectiveness of the proposed outer approximations. The experiments illustrate numerically the inclusion and separation hierarchies emphasized throughout the manuscript. Further, they demonstrate that useful structural information about the feasible region can be extracted a posteriori from \BB trees. In fact, by varying the structure of the \BB tree, we empirically show that the choice of tree significantly influences separation.
\end{enumerate}

\subsection{Relevant literature.}
Our work takes place within a recent and growing body of research that seeks to understand the mathematical and computational properties of \BB. This research often investigates the theoretical limitations and performance of the \BB algorithm. Several foundational papers have established that even seemingly simple integer programs can be difficult for \BB. \textcite{jeroslow1974trivial} constructs a class of simple zero-one integer programs that require an exponential number of nodes to be solved by any \BB algorithm. Similarly,~\textcite{chvatal1980hard} identifies classes of knapsack problems that are computationally hard to solve. More recently,~\textcite{dey2023lower} extends these complexity results by constructing packing, set covering, and traveling salesman problem instances for which any general \BB tree must be of exponential size. \textcite{mahajan2010complexity} show that selecting an optimal disjunction for branching is an NP-hard problem. \textcite{glaser2024computing} further reinforce this by proving that approximating the size of the smallest possible \BB tree is computationally hard, unless the strong exponential time hypothesis fails.

Despite these worst-case complexity results, the practical success of \BB has motivated research into its performance under specific conditions. \textcite{dey2021branch} provide a theoretical justification for this success by showing that, for random binary integer programs with a fixed number of constraints, the \BB algorithm is expected to terminate in polynomial time. The choice of branching rule is also critical to performance. \textcite{dey2024theoretical} conduct a detailed theoretical and computational analysis of full strong-branching, identifying classes of problems where it performs provably well. Complementing this,~\textcite{owen2001disjunctive} demonstrate experimentally that using general disjunctions, instead of simple variable disjunctions, can significantly reduce the size of the \BB tree for general mixed-integer linear programs.

Another stream of research compares the strength of \BB with other well-known techniques, particularly cutting planes. \textcite{basu2023complexity} investigate the theoretical complexity of \BB and cutting plane (CP) algorithms, showing that for convex 0/1 problems, CPs are at least as powerful as \BB based on variable disjunctions. \textcite{cornuejols2025branch} introduced \enquote{skewed k-trees} to show that the hierarchy of relaxations from \BB is incomparable to classical lift-and-project hierarchies. \textcite{fleming2021power} analyzed the Stabbing Planes proof system, which models the reasoning in modern solvers, and related its power back to the Cutting Planes system.

While the aforementioned literature provides a deep understanding of \BB trees, our work differs by focusing on the a-posteriori extraction of structural information from a single, already computed tree. To the best of our knowledge, we are the first to reuse the information of a \BB tree to generate outer approximations of the feasible region and valid inequalities. The most closely related works are discussed next.

The work of~\textcite{munoz2025compressing} on tree compression also performs an a-posteriori analysis of a \BB tree. Their goal is to compress the tree into a smaller one with an equivalent or stronger dual bound, which can serve as a more compact certificate of optimality or help identify strong disjunctions. \textcite{fischetti2013backdoor} propose \enquote{backdoor branching}, a method that identifies a small set of critical branching variables by sampling fractional solutions during a preliminary phase. This \enquote{backdoor} set is then used to guide the branching process in a subsequent, full solve. Their approach shares the idea of using information gathered from one process to improve another, but it does not analyze the structure of a complete \BB tree. The recent work by \textcite{becu2024approximatinggomorymixedintegercut} proves that for a family of instances with right-hand-sides belonging to a lattice, the Gomory Mixed-Integer Cut~(GMIC) closure can be obtained using the same finite list of aggregation weights. This result motivates a simple heuristic to efficiently select aggregations for generating GMICs from historical data of similar instances. Finally,~\textcite{kilincc2014strong} is perhaps the most related work in spirit. The authors propose using the \enquote{discarded} information from strong-branching to generate valid inequalities. This is similar to our goal of using dual bounds to create valid inequalities. However, their method extracts information during the node selection process of an active \BB search, whereas our approach performs an a-posteriori analysis on an entire, static \BB tree that is already generated.

\subsection{Outline.}

Section~\ref{sec:disj-outer-appr} introduces the first and tightest outer approximation, which is based on disjunctive programming. It also discusses its extended formulation and analyzes the computational cost of generating valid inequalities from it. Section~\ref{sec:mixing-set-outer-approximation} presents a second, novel outer approximation derived from the branching logic of the tree. This section shows that while this approximation is looser than the first, its corresponding separation problem is computationally more manageable. Section~\ref{sec:valid-inequalities} introduces a third outer approximation based on a mixing-set formulation and establishes its relationship to the second approximation, showing that its continuous relaxation is weaker and offers no computational benefits for cut generation. Motivated by the computational challenges of the previous methods, Section~\ref{sec:star-ineq-insp} introduces a new family of valid inequalities called Star Tree Inequalities, proves their validity, and demonstrates that they can be separated efficiently using a polynomial-time combinatorial algorithm. Section~\ref{sec:numerical-analysis} provides a computational analysis to empirically evaluate the trade-offs between the theoretical strength of the proposed approximations and the practical cost of generating cuts from them. Finally, Section~\ref{sec:conclusion} concludes with a summary of the main findings and outlines potential avenues for future research.


\section{A Disjunctive Programming based Outer Approximation}
\label{sec:disj-outer-appr}

In this section, we introduce an outer approximation based on disjunctive programming. We begin by defining the approximation and formally proving its validity. Next, we present its linear reformulation in an extended space and analyze its size. We then describe the use of a Cut-Generating Linear Program (CGLP) to derive valid inequalities in the original variable space. Finally, we examine the dimensions of this CGLP to motivate the development of more computationally tractable approximations in subsequent sections.

\subsection{Notation and Assumptions}
\label{sec:notation}
We first introduce the notation for the class of optimization problems under study. We consider Mixed-Binary linear Problems~(\MBPs) in the following standard form:
\begin{equation}
    \label{eq:generic-mip}
    \begin{alignedat}{2}
        \min_{\decvar \in \R^\decvarsize} & \quad \objvect^\top \decvar \\
        \st & \quad \consmat \decvar \geq \conslhs, \\
            & \quad \decvar_\binindex \in \{0, 1\}, && \quad \binindex \in \binaryset, \\
            & \quad \lowerb_\binindex \leq \decvar_\binindex \leq \upperb_\binindex, && \quad \binindex \in \contset.
    \end{alignedat}
\end{equation}
Here,~$\objvect \in \reals^\decvarsize$ is the objective vector,~$\consmat \in \reals^{m \times \decvarsize}$ is the constraint matrix, and~$\conslhs \in \reals^m$ is the right-hand side vector. The vector~$\decvar \in \reals^\decvarsize$ represents the decision variables. The index set of these variables~$\varset \define \{1, \dots, \decvarsize\}$ is partitioned into a set for binary variables,~$\binaryset = \{1, \dots, \binvarsize\}$, and a set for continuous variables,~$\contset = \{\binvarsize + 1, \dots, \decvarsize\}$. The continuous variables are bounded by~$\lowerb_j$ and~$\upperb_j$, which may be infinite. The feasible region of Problem~\eqref{eq:generic-mip} is given by~$\feasset \define \{ \decvar \in \decvarset: \consmat \decvar \geq \conslhs \} $, where~$\decvarset$ is the set constructed by considering only the variable bounds, \ie,
\begin{equation*}
  \decvarset \define
  \left\{
    \decvar \in \reals^\decvarsize : \
    \begin{aligned}
      &\decvar_\binindex \in \{0,1\}, \quad \binindex \in \binaryset, \\
      &\lowerb_\binindex \leq \decvar_\binindex \leq \upperb_\binindex, \quad
        \binindex \in \contset.
    \end{aligned}
  \right\}.
\end{equation*}
Throughout this document, we use the \enquote{hat} notation, as in~$\lpfeasset$, to denote the continuous relaxation of a set. For instance,~$\lpfeasset$ is the feasible region of the continuous relaxation of Problem~\eqref{eq:generic-mip}, where the binary constraints~$\decvar_\binindex \in \{0, 1\}$ are relaxed to~$\decvar_\binindex \in [0,1]$ for all $\binindex \in \binaryset$.

Next, we introduce the notation and assumptions regarding the \BB tree considered.
The \BB method solves an \MBP by constructing a search tree,~$\tree = (\nodes, \edges)$, where~$\nodes$ is the set of nodes and~$\edges$ is the set of edges. Each node~$\node \in \nodes$ corresponds to a subproblem of the original \MBP, while each edge represents a branching decision that partitions the feasible set of the parent node's subproblem.

We make two common assumptions regarding the \BB tree~$\tree$ that we consider:
\begin{enumerate}
\item Branching is restricted to elementary binary branching, where any child node is created by fixing a binary variable to either zero or one.
\item Every node has either zero or two children. Nodes with zero children are the leaf nodes, which we denote by the set $\leaves \subseteq \nodes$.
\end{enumerate}
These assumptions imply that the leaf nodes $\leaves$ induce a partition of the binary variable space and, by extension, of the feasible region $\feasset$. Any feasible point $\decvar \in \feasset$ satisfies the branching decisions corresponding to exactly one leaf node. Following standard literature terminology~\cite{bondy1976graph}, we refer to a tree with this structure as a \enquote{full binary tree}.

We remark that considering full binary trees is not restrictive in practice. Any binary tree that does not directly imply a partition of the feasible region, for instance, due to pruning of some nodes in the \BB process, can be completed by reintroducing the missing nodes. Since the methods discussed in this paper only require valid dual bounds for any node of the tree, we can assign them the dual bound of their parent, which is necessarily valid.

We also define notations capturing the relationships between nodes in the trees. For any non-root node~$\node \in \nodes \setminus \{\rootn\}$, we denote its unique parent and sibling as~$\parent(\node)$ and~$\sibling(\node)$, respectively. Similarly, for any non-leaf node~$\node \in \nodes \setminus \leaves$, its children created by branching on variable~$\decvar_j$ are denoted by~$\child_0(\node)$ (such that~$\decvar_{j}=0$) and~$\child_1(\node)$ (where~$\decvar_j=1$). The path from the root~$\rootn$ to any node~$\node$ is defined by a series of variable fixings. We define the sets~$\zerobranchset_\node \subseteq \binaryset$ and~$\onebranchset_\node \subseteq \binaryset$ as the indices of the binary variables fixed to 0 and 1, respectively, along this unique path. We let $\binindex(\node) \in \binaryset$ be the index of the variable on which branching occurred to create node $\node$.

As mentioned earlier, we assume that a valid dual bound~$\lpval_\node$ is available at each node~$\node \in \nodes$. This value is a lower bound on the optimal objective of the subproblem at node $\node$:
\begin{equation*}
  \lpval_\node \leq \min \left\{ \objvect^\top \decvar : \
    \begin{array}{l}
      \decvar \in \feasset \\
      \decvar_\binindex = 0, \quad \binindex \in \zerobranchset_\node, \\
      \decvar_\binindex = 1, \quad \binindex \in \onebranchset_\node,
    \end{array}
  \right\}.
\end{equation*}

\subsection{Definition and Validity of the Outer Approximation}

We construct the outer approximation using the disjunctive theory established by~\textcite{balas2018disjunctive}. Central to this construction are the \emph{atoms} of the branch-and-bound tree, a concept recently employed in~\cite{dey2024theoretical,dey2023lower}. For each leaf node~$\node \in \leaves$, we define the corresponding atom~$\atom_\node$ as
\begin{equation}
  \label{eq:atom-def}
  \atom_\node \define \left\{ \decvar \in \decvarsetLP :
    \begin{array}{ll}
      \objvect^\top \decvar \geq \lpval_\node, \\
      \decvar_{\binindex} = 0, & \binindex \in \zerobranchset_{\node}, \\
      \decvar_{\binindex} = 1, & \binindex \in \onebranchset_{\node}
    \end{array}
    \right\}.
\end{equation}
Each atom~$\atom_\node$ is a polyhedron that incorporates three sets of constraints: the branching decisions that define the path to node~$\node$, the local dual bound~$\objvect^\top \decvar \geq \lpval_\node$ at that node, and the original variable bounds contained within the definition of~$\decvarsetLP$. We recall that~$\decvarsetLP$ is the continuous relaxation of the set~$\decvarset$.

Using this notation, the first outer approximation~$\disjouterapprox$ is defined as the convex hull of the union of all atoms:
\begin{equation}
    \label{eq:balas_outer}
    \disjouterapprox \define \mathrm{conv} \left( \bigcup_{\node \in \leaves} \atom_{\node} \right).
\end{equation}
This formulation differs from the classical approach in disjunctive programming~\cite{balas2018disjunctive}, where the sets in the union are typically defined by the original problem constraints~$\consmat \decvar \geq \conslhs$ rather than by the dual bound inequality~$\objvect^\top \decvar \geq \lpval_\node$. This choice is motivated by our objective to construct approximations using only the information contained within the \BB tree. The dual bound $\lpval_\node$ is a fundamental piece of data generated during the BB process for each node. This decision also results in a more compact formulation, introducing the central trade-off between approximation tightness and computational cost that we explore throughout this work.

We now formally establish that~$\disjouterapprox$ is a valid outer approximation of the feasible set~$\feasset$.

\begin{proposition}
  The set~$\disjouterapprox$ is an outer approximation of the set~$\feasset$; that is,
  \begin{equation*}
    \feasset \subseteq \disjouterapprox.
  \end{equation*}
\end{proposition}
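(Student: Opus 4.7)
The plan is to show pointwise inclusion: pick an arbitrary $\decvar \in \feasset$ and exhibit a single leaf $\node \in \leaves$ such that $\decvar \in \atom_\node$. Since $\atom_\node \subseteq \bigcup_{\node' \in \leaves} \atom_{\node'} \subseteq \conv(\bigcup_{\node' \in \leaves} \atom_{\node'}) = \disjouterapprox$, this yields $\feasset \subseteq \disjouterapprox$ immediately.

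First I would invoke the partition property of the leaves established in the discussion of the \BB tree assumptions. Because $\tree$ is a full binary tree with elementary binary branching, the branching dictates a partition of $\{0,1\}^{\binvarsize}$, and hence of $\feasset$, induced by the leaves. Concretely, for any $\decvar \in \feasset$ one has $\decvar_\binindex \in \{0,1\}$ for $\binindex \in \binaryset$, so there exists exactly one leaf $\node \in \leaves$ whose branching decisions are consistent with $\decvar$, \ie, $\decvar_\binindex = 0$ for all $\binindex \in \zerobranchset_\node$ and $\decvar_\binindex = 1$ for all $\binindex \in \onebranchset_\node$. This takes care of the branching constraints in the definition~\eqref{eq:atom-def} of $\atom_\node$, and also gives $\decvar \in \decvarset \subseteq \decvarsetLP$ from the variable bounds in $\feasset$.

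It remains to check the dual bound inequality $\objvect^\top \decvar \geq \lpval_\node$. Here I would use the assumption, stated right before the definition of the atoms, that $\lpval_\node$ is a valid lower bound on the minimum of $\objvect^\top \decvar$ over the subproblem obtained by intersecting $\feasset$ with the fixings $\decvar_\binindex = 0$ for $\binindex \in \zerobranchset_\node$ and $\decvar_\binindex = 1$ for $\binindex \in \onebranchset_\node$. Since the $\decvar$ chosen above is feasible for exactly that subproblem, we immediately obtain $\objvect^\top \decvar \geq \lpval_\node$. Combining with the previous paragraph, $\decvar$ satisfies all three blocks of constraints defining $\atom_\node$, so $\decvar \in \atom_\node$ and the inclusion is proved.

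There is no real obstacle here: the result follows directly from the partition induced by the leaves and from the validity of $\lpval_\node$ as a dual bound. The only point that deserves care is to argue rigorously that \emph{every} binary-feasible point can be routed to a unique leaf; this relies on the full-binary-tree assumption and the remark in the excerpt that any pruned tree can be completed by reintroducing missing nodes with their parent's dual bound, so the partition argument applies without loss of generality.
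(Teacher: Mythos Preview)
Your proposal is correct and follows essentially the same approach as the paper: pick $\decvar \in \feasset$, use the full-binary-tree partition assumption to locate the unique leaf $\node$ whose branching fixings are satisfied by $\decvar$, then invoke the validity of $\lpval_\node$ to get $\objvect^\top \decvar \geq \lpval_\node$, so $\decvar \in \atom_\node \subseteq \disjouterapprox$. The paper's proof is essentially identical in structure and detail.
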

\begin{proof}
  Consider an arbitrary point~$\appdecvar \in \feasset$. We show that~$\appdecvar \in \disjouterapprox$. Since by assumption (see Section~\ref{sec:notation}), we assume that the \BB tree partitions the feasible region,~$\appdecvar$ must satisfy the branching conditions corresponding to a unique leaf node~$\othernode \in \leaves$. Therefore, we have
  \begin{equation*}
    \appdecvar \in \left\{ \decvar \in \decvarsetLP :
    \begin{array}{ll}
      \decvar_{\binindex} = 0, & \binindex \in \zerobranchset_{\othernode}, \\
      \decvar_{\binindex} = 1, & \binindex \in \onebranchset_{\othernode}
    \end{array}
    \right\}.
  \end{equation*}
  Furthermore, because~$\appdecvar \in \feasset$, it must also satisfy the local dual bound at node~$\othernode$, so~$\objvect^\top \appdecvar \geq \lpval_\othernode$. It follows that~$\appdecvar \in \atom_\othernode$. Consequently,
  \begin{equation*}
    \appdecvar \in \atom_\othernode \subseteq \bigcup_{\node \in \leaves} \atom_\node \subseteq \mathrm{conv} \left( \bigcup_{\node \in \leaves} \atom_\node \right) = \disjouterapprox.
  \end{equation*}
\end{proof}

Since each set~$\atom_\node$ in the union shares the same recession cone, their convex hull~$\disjouterapprox$ is a polyhedron. This set admits a linear reformulation in an extended variable space~\cite{conforti2014integer}. To express this, we first write each atom as~$\atom_\node = \{ \decvar \in \reals^\decvarsize : \atommat_\node \decvar \geq \atomrhs_\node \}$, where the matrix~$\atommat_\node \in \reals^{\atomconssize \times \decvarsize}$ and vector~$\atomrhs_\node \in \reals^\atomconssize$ are formed by combining the dual bound constraint, the branching constraints (written as inequalities), and the variable bounds from~$\decvarsetLP$. Note that the number of rows,~$\atomconssize$, is the same for all atoms and is given by~$\atomconssize = 1 + 2\card{\binaryset} + o$, where~$o$ is the number of bound constraints on the continuous variables.

The extended formulation of~$\disjouterapprox$ is constructed by introducing auxiliary variables~$\indvar^\node \in \reals^\decvarsize$ and~$\indvar_{0}^\node \in \reals$ for each leaf node~$\node \in \leaves$. We collect these auxiliary variables into a single vector~$\indvar \in \reals^\indvarsize$, where~$\indvarsize = \card{\leaves}(\decvarsize + 1)$. The extended formulation, which we denote by~$\appset_{\bal, \lin}$, is then given by
\begin{equation}
  \label{eq:projset-bal}
  \appset_{\bal, \lin} \define \left\{
  (\decvar, \indvar) \in \reals^\decvarsize \times \reals^\indvarsize :
  \begin{aligned}
  \atommat_\node \indvar^\node & \geq  \atomrhs_\node \indvar_{0}^\node, \quad  \node \in \leaves, \\
  \displaystyle \sum_{\node \in \leaves} \indvar^{\node}  &=  \decvar, \\
 \displaystyle \sum_{\node \in \leaves} \indvar_{0}^\node  &=  1, \\
  \indvar_{0}^\node & \ge  0,  \quad \node \in \leaves
  \end{aligned}
  \right\}.
\end{equation}
The following theorem establishes the equivalence between~$\disjouterapprox$ and the projection of~$\appset_{\bal, \lin}$ in the~$\decvar$-space.

\begin{theorem}[\textcite{balas1985disjunctive}]
  \label{thm:balas-nb-vars}
  The set~$\appset_{\bal, \lin}$ defined in~\eqref{eq:projset-bal} provides an extended formulation of~$\disjouterapprox$. Its projection onto the space of the original~$\decvar$-variables is precisely~$\disjouterapprox$; that is,
  \begin{equation*}
    \proj_{\decvar} (\appset_{\bal, \lin}) = \disjouterapprox.
  \end{equation*}
  This formulation introduces~$\card{\leaves}(\decvarsize + 1)$ auxiliary variables and~$|\leaves|(\atomconssize + 1) + 4$ inequality constraints.
\end{theorem}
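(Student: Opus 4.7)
The plan is to establish the theorem in two stages: first the projection identity $\proj_\decvar(\appset_{\bal,\lin}) = \disjouterapprox$, which I would prove by a standard Balas-type argument in both directions, and then a direct verification of the dimension counts from~\eqref{eq:projset-bal}.

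For the inclusion $\disjouterapprox \subseteq \proj_\decvar(\appset_{\bal,\lin})$, I would take an arbitrary $\decvar \in \disjouterapprox$, write it as a convex combination $\decvar = \sum_{\node \in \leaves} \lambda_\node y^\node$ with $y^\node \in \atom_\node$, $\lambda_\node \geq 0$, and $\sum_\node \lambda_\node = 1$, and lift it by setting $\indvar_0^\node \define \lambda_\node$ and $\indvar^\node \define \lambda_\node y^\node$ for each leaf. Scaling the defining inequality $\atommat_\node y^\node \geq \atomrhs_\node$ of $\atom_\node$ by $\lambda_\node \geq 0$ yields $\atommat_\node \indvar^\node \geq \atomrhs_\node \indvar_0^\node$, and the two summation conditions in~\eqref{eq:projset-bal} follow by construction. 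For the reverse inclusion, I would start from $(\decvar, \indvar) \in \appset_{\bal,\lin}$ and partition the leaves into $\leaves^+ \define \{\node : \indvar_0^\node > 0\}$ and $\leaves^0 \define \{\node : \indvar_0^\node = 0\}$. For $\node \in \leaves^+$, the normalized vector $y^\node \define \indvar^\node/\indvar_0^\node$ satisfies $\atommat_\node y^\node \geq \atomrhs_\node$ and hence lies in $\atom_\node$. For $\node \in \leaves^0$, the inequality $\atommat_\node \indvar^\node \geq 0$ places $\indvar^\node$ in the recession cone of $\atom_\node$, and I would absorb these contributions into any chosen $\tilde y^{\node^*}$ with $\node^* \in \leaves^+$ to obtain a convex-combination representation of $\decvar$ over $\bigcup_\node \atom_\node$.

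The absorption step is the principal obstacle, and I would resolve it through a recession-cone observation specific to this setting. In any recession direction $d$ of $\atom_\node$, the binary bounds $\decvar_\binindex \in [0,1]$ force $d_\binindex = 0$ for every $\binindex \in \binaryset$; the continuous bounds contribute only node-independent sign conditions on the remaining components; and the homogenized dual-bound constraint is the node-independent inequality $\objvect^\top d \geq 0$. Hence every atom shares the same recession cone, each $\indvar^\node$ with $\node \in \leaves^0$ is a recession direction of every atom in $\leaves^+$, and its absorption into $\tilde y^{\node^*}$ preserves membership in $\atom_{\node^*}$. The same observation incidentally shows that $\conv(\bigcup_\node \atom_\node)$ is already closed, avoiding the usual closure technicality in general disjunctive programming.

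The dimension counts then follow by inspection of~\eqref{eq:projset-bal}: the formulation carries $|\leaves|$ vectors $\indvar^\node \in \R^\decvarsize$ and $|\leaves|$ scalars $\indvar_0^\node$, for a total of $|\leaves|(\decvarsize+1)$ auxiliary variables; its inequalities comprise the $\atomconssize$ rows of $\atommat_\node \indvar^\node \geq \atomrhs_\node \indvar_0^\node$ for each of the $|\leaves|$ leaves, the $|\leaves|$ nonnegativity constraints $\indvar_0^\node \geq 0$, and the two equalities $\sum_\node \indvar^\node = \decvar$ and $\sum_\node \indvar_0^\node = 1$ written as $\leq/\geq$ pairs, giving $|\leaves|(\atomconssize + 1) + 4$ in total.
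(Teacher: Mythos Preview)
The paper does not provide its own proof of this theorem; it is stated with attribution to Balas (1985) and the text proceeds directly to the next subsection. Your argument is the standard disjunctive-programming construction and is correct: the forward inclusion lifts a convex combination into the extended space, and the reverse inclusion normalizes the positive-weight blocks and absorbs the zero-weight blocks as recession directions. The shared-recession-cone observation you give is exactly what makes the absorption step go through and is also the reason the paper remarks, just before the theorem, that $\disjouterapprox$ is a polyhedron.

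One small caveat on the dimension count: you explain the ``$+4$'' by treating $\sum_\node \indvar^\node = \decvar$ and $\sum_\node \indvar_0^\node = 1$ as two equalities, each split into a $\leq/\geq$ pair. But the first is a vector equation with $\decvarsize$ scalar components, so a literal row count would yield $2\decvarsize + 2$ rather than $4$. The ``$+4$'' in the theorem statement is thus a block-level count (each equality block treated as one constraint) rather than a strict row count; your explanation matches that convention, but it is worth being aware that it is not the number of scalar inequality rows.
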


\subsection{Cut Separation}

\label{sec:cut-separation}

Outer approximations of a set in an extended variable space are computationally impractical. Therefore, we focus on generating valid inequalities in the original space of~$\decvar$-variables. We consider the Cut-Generating Linear Program (CGLP) to perform this projection, following the approach of~\textcite{balas1996mixed,balas1993lift}.

To formalize this process and make it applicable to the outer approximations presented in further sections, we adopt generic notations. Let the feasible region in the extended space be represented by a generic polyhedron
\begin{equation*}
  \genset = \{(\decvar,\indvar) \in \reals^\decvarsize \times \reals^\indvarsize \mid \genmat_\decvar \decvar + \genmat_\indvar \indvar \geq \genrhs\},
\end{equation*}
where~$\genmat_\decvar \in \reals^{\gennbcons \times \decvarsize}$ and~$\genmat_\indvar \in \reals^{\gennbcons \times \indvarsize}$. The number of constraints,~$\gennbcons \in \mathbb{Z}_+$, depends on the specific extended formulation used.

The goal of cut separation is to find a valid inequality~$\cglpmult^\top \decvar \geq \cglpmult_0$ that is the most violated by a given point~$\appdecvar \in \reals^\decvarsize$. The \CGLP is designed to find the components of such a cut. It employs dual multipliers~$\cglpvar \in \reals^{\gennbcons}_+$ associated with the constraints of~$\genset$. The CGLP reads
\begin{subequations}
  \label{optim:separation-lp}
  \begin{align*}
    \max_{\cglpmult, \cglpvar} \quad
    & \cglpmult_0 - \cglpmult^\top \appdecvar  \\
    \st \quad
    & \genrhs^\top \cglpvar  = \cglpmult_0, \\
    & \genmat_\decvar^\top \cglpvar = \cglpmult, \\
    & \genmat_\indvar^\top \cglpvar = 0,\\
    & \normfun (\cglpvar) = 1,\\
    &\cglpvar \geq 0,
  \end{align*}
\end{subequations}
where constraint~$\normfun (\cglpvar) = 1$ is added to normalize the dual multipliers~$\cglpvar$. We recall that~$\gennbcons$ and~$\indvarsize$ represent the number of constraints of the generic formulation and the number of auxiliary variables used to construct the extended formulation, respectively. By eliminating the variables~$\cglpmult$ and~$\cglpmult_0$, the CGLP can be simplified to a more compact form with~$\gennbcons$ variables and~$\indvarsize + 1$ constraints~\cite{conforti2014integer}.

In this paper, we emphasize the construction of computationally tractable outer approximations. Therefore, we analyze the size of the CGLP associated with the extended formulation~$\appset_{\bal, \lin}$. From Theorem~\ref{thm:balas-nb-vars}, we know that for~$\appset_{\bal, \lin}$, the number of auxiliary variables is~$\indvarsize = \card{\leaves}(\decvarsize + 1)$ and the number of constraints is~$\gennbcons = |\leaves|(\atomconssize + 1) + 4$, where~$\atomconssize = 1 + 2 \card{\binaryset} + o$ and $o$ represents the number of variable bound constraints. This leads to the following observation on the size of the resulting \CGLP.

\begin{observation}
  \label{rem:size-balas}
  The \CGLP associated with the extended formulation~$\appset_{\bal, \lin}$ requires the construction of~$\order (\card{\leaves} (\card{\binaryset} + o))$ variables and~$\order (\card{\leaves} \decvarsize)$ constraints.
\end{observation}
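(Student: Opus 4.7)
The plan is to read off both asymptotic counts directly from the structural quantities already assembled in Theorem~\ref{thm:balas-nb-vars} together with the generic CGLP template from Section~\ref{sec:cut-separation}. First, I would recall that after eliminating the cut-coefficient variables $\cglpmult$ and $\cglpmult_0$ (as noted in the paragraph following the CGLP), the compact form of~\eqref{optim:separation-lp} has exactly $\gennbcons$ decision variables (the dual multipliers $\cglpvar$, one per constraint of the extended formulation) and $\indvarsize + 1$ constraints (one coupling constraint per auxiliary variable of the extended formulation, plus the normalization $\normfun(\cglpvar)=1$). So the task reduces to substituting the values of $\gennbcons$ and $\indvarsize$ specialized to $\appset_{\bal,\lin}$.

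From Theorem~\ref{thm:balas-nb-vars}, the extended formulation of $\disjouterapprox$ has $\gennbcons = \card{\leaves}(\atomconssize+1)+4$ inequality constraints and $\indvarsize = \card{\leaves}(\decvarsize+1)$ auxiliary variables, where $\atomconssize = 1 + 2\card{\binaryset} + o$. Plugging into the CGLP counts, the number of variables of the CGLP is
\begin{equation*}
  \gennbcons \;=\; \card{\leaves}\bigl(2 + 2\card{\binaryset} + o\bigr) + 4 \;=\; \order\!\bigl(\card{\leaves}(\card{\binaryset}+o)\bigr),
\end{equation*}
and the number of constraints of the CGLP is
\begin{equation*}
  \indvarsize + 1 \;=\; \card{\leaves}(\decvarsize + 1) + 1 \;=\; \order\!\bigl(\card{\leaves}\decvarsize\bigr).
\end{equation*}
Both bounds then follow.

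There is no conceptual obstacle here; the statement is a bookkeeping corollary of Theorem~\ref{thm:balas-nb-vars} and the standard form of the CGLP. The only small care required is to remember to use the reduced CGLP (after eliminating $\cglpmult$ and $\cglpmult_0$) rather than the unreduced form, since it is the reduced version whose dimensions are advertised in the observation. Once this is stated, the rest is a direct substitution and simplification.
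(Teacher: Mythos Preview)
Your proposal is correct and mirrors the paper's own justification: the paper likewise just substitutes $\indvarsize = \card{\leaves}(\decvarsize+1)$ and $\gennbcons = \card{\leaves}(\atomconssize+1)+4$ with $\atomconssize = 1 + 2\card{\binaryset} + o$ from Theorem~\ref{thm:balas-nb-vars} into the compact \CGLP dimensions ($\gennbcons$ variables, $\indvarsize+1$ constraints) stated in Section~\ref{sec:cut-separation}. There is no additional argument in the paper beyond this bookkeeping, so your write-up is essentially identical in approach and detail.
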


Observation~\ref{rem:size-balas} highlights the computational challenge associated to the \CGLP of~$\appset_{\bal, \lin}$. While~$\disjouterapprox$ provides a tight outer approximation of~$\feasset$, the size of its corresponding \CGLP grows with the number of leaf nodes in the \BB tree and the number of variables. Solving a linear problem that is~$\card{\leaves}$ times larger than the original problem to generate each valid inequality not only increases the number of iterations needed for the resolution step to finish, but also requires constructing and storing a model that is $\card{\leaves}$ times larger in memory. This is observed and discussed in, \eg,~\cite{chen2012computational,chen2011finite}. In the remainder of this paper, we explore alternative approximations that, while less tight than~$\disjouterapprox$, allow for the generation of valid inequalities with computational requirements that scale better with the size of the \BB tree.


\section{A Branching-based Outer Approximation}
\label{sec:mixing-set-outer-approximation}

This section presents the second outer approximation. Unlike the disjunctive approximation in Section~\ref{sec:disj-outer-appr}, this new approximation is extracted from an interpretable structure, namely, a binary polynomial set. We first define this outer approximation and prove its validity. We then focus on a subset of its defining constraints, for which we derive a linear reformulation. This reformulation can subsequently be tightened and simplified by leveraging the disjunctive structure of the \BB tree.  We show that the resulting outer approximation is weaker than the one presented in Section~\ref{sec:disj-outer-appr}. Finally, we analyze the size of the corresponding \CGLP. This analysis reveals why the approximation, despite its relative weakness, remains computationally attractive.

\subsection{Definition and Validity of the Outer Approximation}
We introduce the second outer approximation of the set~$\feasset$, which we denote by~$\appset_\treeind$. This approximation is the intersection of two sets,~$\bset$ and~$\msset$. The set~$\bset$ models the activation of leaf nodes, while the set~$\msset$ enforces the local validity of the dual bound. Before proceeding to the formal definition, we offer a remark to avoid ambiguity.

\begin{remark}
  Contrary to~$\appset_\balasind$, the second outer approximation~$\appset_\treeind$ is directly defined using auxiliary variables~$\indvar$. In addition, these variables satisfy~$(\decvar, \indvar) \in \decvarset \times \{0, 1\}^{\card{\leaves}}$. This is done on purpose and is relaxed later when the continuous relaxation is considered.
\end{remark}

The first component of~$\appset_\treeind$ is the set~$\bset$, which we call the binary polynomial set. It is defined as
\begin{equation}
  \label{eq:set-B}
  \bset \define \bigg \{(\decvar, \indvar ) \in \decvarset \times \{0, 1\}^{\card{\leaves}}: \ \indvar_\node = \prod_{\binindex \in \zerobranchset_{\node}} (1 - \decvar_{\binindex}) \cdot \prod_{\binindex \in \onebranchset_{\node}} \decvar_{\binindex}, \ \node \in \leaves \bigg  \}.
\end{equation}
This set links the branching decisions in the tree to the auxiliary variables~$\indvar$. For any point~$(\decvar, \indvar) \in \bset$, the variable~$\indvar_\node$ is equal to one if and only if the decision vector~$\decvar$ satisfies all branching decisions along the path from the root to the leaf node~$\node$.

The second component is the set~$\msset$, defined as
\begin{equation}
  \label{eq:set-M}
  \msset \define \bigg \{(\decvar, \indvar ) \in \decvarset \times \{0, 1\}^{\card{\leaves}}:
  \objvect^\top \decvar \geq \sum_{\node \in \leaves} \lpval_{\node} \indvar_{\node} \bigg \}.
\end{equation}
The set~$\msset$ relates the objective function value to the dual bounds of the leaf nodes, activated by the~$\indvar$ variables.

The outer approximation~$\appset_\treeind$ is then defined as the intersection of these two sets:
\begin{equation}
  \label{eq:def-appset2}
  \appset_\treeind \define \bset \cap \msset.
\end{equation}

We now show that~$\appset_\treeind$ defines a valid outer approximation of~$\feasset$.

\begin{proposition}
  The set~$\proj_\decvar(\appset_2)$ is an outer-approximation of the set~$\feasset$; that is,
  \begin{equation*}
    \feasset \subseteq \proj_\decvar(\appset_2).
  \end{equation*}
\end{proposition}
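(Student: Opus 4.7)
The plan is a direct membership argument that mirrors the proof for $\disjouterapprox$: take an arbitrary $\appdecvar \in \feasset$ and construct an explicit lifting $\appindvar \in \{0,1\}^{\card{\leaves}}$ so that $(\appdecvar, \appindvar)$ lies in both $\bset$ and $\msset$.

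First, I would invoke the partitioning assumption on the \BB tree stated in Section~\ref{sec:notation}: there exists a unique leaf $\othernode \in \leaves$ such that $\appdecvar_\binindex = 0$ for all $\binindex \in \zerobranchset_\othernode$ and $\appdecvar_\binindex = 1$ for all $\binindex \in \onebranchset_\othernode$. I then define the lifting by $\appindvar_\othernode \define 1$ and $\appindvar_\node \define 0$ for every other leaf $\node \in \leaves \setminus \{\othernode\}$.

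Next, I would verify $(\appdecvar, \appindvar) \in \bset$ by checking the polynomial identity in~\eqref{eq:set-B} leaf by leaf. For leaf $\othernode$, all factors $(1-\appdecvar_\binindex)$ with $\binindex \in \zerobranchset_\othernode$ and $\appdecvar_\binindex$ with $\binindex \in \onebranchset_\othernode$ equal one, matching $\appindvar_\othernode = 1$. For any other leaf $\node \neq \othernode$, uniqueness of the leaf consistent with $\appdecvar$ guarantees that at least one branching condition defining $\node$ is violated by $\appdecvar$; since $\appdecvar \in \decvarset$ implies $\appdecvar_\binindex \in \{0,1\}$ for all $\binindex \in \binaryset$, that violated factor is zero, so the product equals $0 = \appindvar_\node$. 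Checking $(\appdecvar, \appindvar) \in \msset$ then reduces to the inequality $\objvect^\top \appdecvar \geq \lpval_\othernode$, which follows immediately from the validity of the dual bound at $\othernode$ recalled at the end of Section~\ref{sec:notation}, because $\appdecvar$ is feasible for the subproblem at $\othernode$.

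Combining these two verifications yields $(\appdecvar, \appindvar) \in \bset \cap \msset = \appset_\treeind$, and projecting onto the $\decvar$-coordinates gives $\appdecvar \in \proj_\decvar(\appset_\treeind)$. I do not anticipate a genuine obstacle: every step is a direct unpacking of definitions. The only subtlety worth stating carefully is the uniqueness-of-leaf argument, since it is precisely what prevents the polynomial expression for leaves $\node \neq \othernode$ from accidentally evaluating to a nonzero quantity, and it is what allows the single active dual bound $\lpval_\othernode$ on the right-hand side of~\eqref{eq:set-M} to be dominated by $\objvect^\top \appdecvar$.
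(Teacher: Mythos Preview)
Your proposal is correct and follows essentially the same approach as the paper's own proof: pick the unique leaf $\othernode$ consistent with $\appdecvar$, lift via $\appindvar_\othernode=1$ and $\appindvar_\node=0$ otherwise, then verify membership in $\bset$ and $\msset$ separately. You actually spell out the verification of the polynomial identities in~\eqref{eq:set-B} more carefully than the paper does, which simply asserts that the pair satisfies the defining equations of $\bset$.
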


\begin{proof}
  Let~$\decvar \in \feasset$ be a feasible point. We show that there exists a~$\indvar \in \{0, 1\}^{\card{\leaves}}$ such that~$(\decvar, \indvar) \in \appset_\treeind$. By assumption, see Section~\ref{sec:notation}, there exists exactly one leaf node~$\othernode \in \leaves$ such that~$\decvar$ satsifies all branching decisions to reach that node. We set~$\indvar_\othernode = 1$, and~$\indvar_\node = 0$ for all~$\node \in \leaves \setminus \{\othernode\}$. The pair $(\decvar, \indvar)$ satisfies the defining equations of~$\bset$. For~$(\decvar, \indvar)$, the defining inequality of~$\msset$ reduces to~$\objvect^\top \decvar \geq \sum_{\node \in \leaves} \lpval_{\node} \indvar_{\node} = \lpval_{\othernode}$, which is necessarily satisfied for~$\decvar$ since~$\lpval_\othernode$ is a locally valid dual bound.
\end{proof}

\subsection{Reformulation and Tightening of the Binary Polynomial Set}
\label{sec:binary-polyn-set}
We now discuss the reformulation and tightening of the binary polynomial set~$\bset$. We first present the linear reformulation of~$\bset$, denoted as~$\bset_\lin$. It requires the introduction of auxiliary binary variables for the non-leaf nodes. We then show how to strengthen~$\bset_\lin$ by taking advantage of the disjunctive structure of the \BB tree. This leads to a tightened set,~$\bset_\tight$. Finally, we define the linear reformulation~$\appset_{\treeind, \lin}$ of~$\appset_{\treeind}$ , using this tightened set.


To define the linear reformulation~$\bset_\lin$, we introduce an auxiliary binary variable~$\indvar_\node$ for each non-leaf node~$\node \in \nodes \setminus \leaves$. We also define the following subsets of nodes. The set~$\zerolbset$ contains nodes reached by branching on a variable and fixing it to zero, while~$\onelbset$ contains nodes reached by fixing a variable to one. Formally,
\begin{equation}
  \zerolbset \define \{\node \in \nodes \setminus \{\rootn\}: \ \decvar_{\binindex (\node)} = 0\}, \quad \onelbset \define \{\node \in \nodes \setminus \{\rootn\}: \ \decvar_{\binindex (\node)} = 1\}. \label{eq:def-lbset}
\end{equation}

The set~$\bset_\lin$ is then defined as
\begin{align}
    \bset_{\lin} \define \left \{ (\decvar, \indvar) \in \decvarset \times \{0, 1\}^{\card{\nodes}} : \
    \begin{aligned}
        \indvar_\rootn &= 1,\\
        \indvar_\node
        &\leq \indvar_{\parent (\node)},   \quad \node \in \nodes \setminus \{\rootn\}, \\
        \indvar_\node
        &\leq 1 - \decvar_{\varindex (\node) } , \quad \node \in \zerolbset, \\
        \indvar_\node
        &\leq  \decvar_{\varindex (\node) } , \quad \node \in \onelbset, \\
        \indvar_\node &\geq 1 - (1 - \indvar_{\parent (\node)})  -  \decvar_{\varindex (\node) }, \quad \node \in \zerolbset, \\
        \indvar_\node &\geq 1 - (1 - \indvar_{\parent (\node)})  -  (1 - \decvar_{\varindex (\node) }), \quad \node \in \onelbset
    \end{aligned}
    \right\}. \label{eq:b-lin}
\end{align}

The following proposition establishes that~$\bset_\lin$ is a valid extended formulation for~$\bset$.
\begin{proposition}
  \label{prop:proj-blin}
  The projection of the set~$\bset_{\lin}$ onto the~$(\decvar,[\indvar_{\node}]_{\node \in \leaves})$ space is equal to the set~$\bset$; that is,
  \begin{equation*}
    \proj_{(\decvar,[\indvar_{\node}]_{\node \in \leaves})} \left( \bset_{\lin} \right) = \bset.
  \end{equation*}
\end{proposition}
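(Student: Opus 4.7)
The plan is to exhibit both inclusions by recognizing that, for binary factors, the six families of constraints defining~$\bset_\lin$ are exactly the standard McCormick/Glover linearization of the two products
\[
  \indvar_\node = \indvar_{\parent(\node)}\,(1-\decvar_{\varindex(\node)}) \quad \text{if } \node \in \zerolbset,
  \qquad
  \indvar_\node = \indvar_{\parent(\node)}\,\decvar_{\varindex(\node)} \quad \text{if } \node \in \onelbset,
\]
together with the base case~$\indvar_\rootn = 1$. Since both factors in each product are binary (as $\decvar_\binindex \in \{0,1\}$ for $\binindex \in \binaryset$ by definition of~$\decvarset$, and $\indvar_\node \in \{0,1\}$ by the domain of~$\bset_\lin$), these inequalities are exact: they force equality with the corresponding product.

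For the inclusion~$\proj_{(\decvar,[\indvar_\node]_{\node\in\leaves})}(\bset_\lin) \subseteq \bset$, I would take a point~$(\decvar,\indvar) \in \bset_\lin$ and argue by induction on the depth of~$\node$ in the \BB tree that
\[
  \indvar_\node = \prod_{\binindex \in \zerobranchset_\node}(1-\decvar_\binindex) \cdot \prod_{\binindex \in \onebranchset_\node}\decvar_\binindex.
\]
The base case is the root, where $\zerobranchset_\rootn = \onebranchset_\rootn = \emptyset$ and $\indvar_\rootn = 1$ by the first constraint of~$\bset_\lin$. For the inductive step at a non-root node~$\node$ with parent~$\parent(\node)$, combine the two upper-bound and one lower-bound constraints (for the appropriate case $\node \in \zerolbset$ or $\node \in \onelbset$) with binarity of~$\indvar_\node$ to conclude $\indvar_\node = \indvar_{\parent(\node)}(1-\decvar_{\varindex(\node)})$ or $\indvar_\node = \indvar_{\parent(\node)}\decvar_{\varindex(\node)}$. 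Using the induction hypothesis on~$\indvar_{\parent(\node)}$ and the fact that the branching variable at~$\node$ adds one element to either~$\zerobranchset_\parent$ or~$\onebranchset_\parent$ to yield~$\zerobranchset_\node$ or~$\onebranchset_\node$, the product formula propagates to~$\node$. Restricting to $\node \in \leaves$ gives the defining equality of~$\bset$.

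For the reverse inclusion~$\bset \subseteq \proj_{(\decvar,[\indvar_\node]_{\node\in\leaves})}(\bset_\lin)$, I would start from~$(\decvar,\indvar) \in \bset$ and extend~$\indvar$ to the non-leaf nodes by defining
\[
  \indvar_\node \define \prod_{\binindex \in \zerobranchset_\node}(1-\decvar_\binindex) \cdot \prod_{\binindex \in \onebranchset_\node}\decvar_\binindex
  \qquad \text{for each } \node \in \nodes \setminus \leaves.
\]
Binarity of the extended~$\indvar$ is immediate from binarity of the~$\decvar_\binindex$. The root identity holds trivially, and for every non-root node~$\node$ the telescoping relation $\indvar_\node = \indvar_{\parent(\node)}(1-\decvar_{\varindex(\node)})$ or $\indvar_\node = \indvar_{\parent(\node)}\decvar_{\varindex(\node)}$ holds exactly, which implies all six inequalities of~$\bset_\lin$ at that node. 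Hence the extended point lies in~$\bset_\lin$ and projects back to~$(\decvar,\indvar)$.

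The only step requiring any care is a clean bookkeeping of the induction, namely tracking how~$\zerobranchset_\node$ and~$\onebranchset_\node$ are obtained from those of~$\parent(\node)$ by adding~$\varindex(\node)$ to the appropriate side, and verifying the three-inequality standard linearization delivers the product identity in the binary case. Everything else is direct verification; I do not anticipate a genuine obstacle.
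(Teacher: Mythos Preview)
Your proposal is correct and follows essentially the same approach as the paper: both arguments rest on the recursive identity $\indvar_\node = \indvar_{\parent(\node)}\,(1-\decvar_{\varindex(\node)})$ or $\indvar_\node = \indvar_{\parent(\node)}\,\decvar_{\varindex(\node)}$, together with the fact that the standard McCormick/Glover linearization of a product of two binaries is exact. The paper packages this slightly differently, introducing an intermediate set $\bset_\ints$ (the product formula imposed at every node, not just the leaves), noting $\proj(\bset_\ints)=\bset$ trivially, and then concluding $\bset_\ints=\bset_\lin$ by citing the standard linearization; your explicit two-inclusion induction is the unpacked version of that same argument.
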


\begin{proof}
  Consider the extended binary polynomial set
  \begin{equation*}
    \bset_\ints \define \left \{(\decvar, \indvar ) \in \decvarset \times \{0, 1\}^{\card{\nodes}}:
    \ \begin{aligned}
      \indvar_\rootn &= 1, \\
      \indvar_\node &= \prod_{\binindex \in \zerobranchset_{\node}} (1 - \decvar_{\binindex}) \cdot \prod_{\binindex \in \onebranchset_{\node}} \decvar_{\binindex}, \ \node \in \nodes \setminus \{\rootn\},
    \end{aligned}
    \right  \}.
  \end{equation*}
  This set extends~$\bset$ by introducing indicator variables~$\indvar_\node$ for all nodes~$\node \in \nodes \setminus \leaves$, along with their corresponding activation constraints. Since the~$\indvar$ variables do not depend on each other, we have~$\proj_{(\decvar,[\indvar_{\node}]_{\node \in \leaves})} (\bset_\ints) = \bset$.

  For any node~$\node \in \nodes \setminus \{\rootn\}$, the definition of~$\indvar_\node$ in~$\bset_\ints$ can be expressed recursively in terms of its parent's indicator variable,~$\indvar_{\parent(\node)}$. If~$\node \in \zerolbset$, we have
  \begin{equation}
    \label{eq:reform-indvar-zero}
    \indvar_\node = \indvar_{\parent(\node)} \cdot  (1 - \decvar_{\varindex})
  \end{equation}
  and, complementarily, if~$\node \in \onelbset$, we have
  \begin{equation}
    \label{eq:reform-indvar-one}
    \indvar_\node = \indvar_{\parent(\node)} \cdot \decvar_{\varindex}.
  \end{equation}
  We can thus rewrite~$\bset_\ints$ using this recursive notation:
  \begin{equation*}
    \bar{\bset}_{\ints} = \left \{(\decvar, \indvar ) \in \decvarset \times \{0, 1\}^{\card{\nodes}}:
    \ \begin{aligned}
      \indvar_\rootn &= 1, \\
      \indvar_\node &= \indvar_{\parent(\node)} \cdot  (1 - \decvar_{\varindex }), \quad \node \in \zerolbset,\\
      \indvar_\node &= \indvar_{\parent(\node)} \cdot  \decvar_{\varindex }, \quad \node \in \onelbset,
    \end{aligned}
    \right  \}.
  \end{equation*}
  By linearizing each bilinear constraint of~$\bset_{\ints}$ using the standard procedure of, e.g.,~\cite{elloumi2023efficient}, we obtain~$\bset_\lin$. Consequently, the equivalence~$\bset_\ints = \bset_\lin$ holds. We may write
  \begin{equation*}
    \proj_{(\decvar,[\indvar_{\node}]_{\node \in \leaves})} (\bset_\lin) = \proj_{(\decvar,[\indvar_{\node}]_{\node \in \leaves})} (\bset_\ints) = \bset.
  \end{equation*}
\end{proof}

We now further tighten the set~$\bset_\lin$ using the disjunctive property of \BB trees.

\begin{proposition}
  \label{prop:tightening}
  For any non-leaf node~$\node \in \nodes \setminus \leaves$, the equality
  \begin{equation}
    \label{eq:child-parent}
    z_\node = z_{\child_0 (\node)} + z_{\child_1 (\node)}
  \end{equation}
  is valid for the set~$\bset_\lin$. Furthermore, if we strengthen~$\bset_\lin$ by adding Equation~\eqref{eq:child-parent} for every~$\node \in \nodes \setminus \leaves$, then the inequalities
  \begin{align}
    \indvar_\node
    &\leq \indvar_{\parent (\node)},   \quad \node \in \nodes \setminus \{\rootn\}, \label{eq:triv1} \\
    \indvar_\node
    &\geq 1 - (1 - \indvar_{\parent (\node)})  -  \decvar_{\varindex (\node) }, \quad \node \in \zerolbset, \label{eq:triv2}\\
    \indvar_\node
    &\geq 1 - (1 - \indvar_{\parent (\node)})  -  (1 - \decvar_{\varindex (\node) }), \quad \node \in \onelbset, \label{eq:triv3}
  \end{align}
  become redundant and can be removed from the definition of~$\bset_\lin$.
\end{proposition}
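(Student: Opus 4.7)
The plan is to establish the two assertions in sequence, leveraging the identity~$\bset_\lin = \bset_\ints$ shown inside the proof of Proposition~\ref{prop:proj-blin}. For the validity of~\eqref{eq:child-parent}, I would work directly with the product representation of the indicator variables in~$\bset_\ints$. Fix a non-leaf node~$\node$ and let~$\varindex$ denote the variable branched on at~$\node$. Then $\zerobranchset_{\child_0(\node)} = \zerobranchset_\node \cup \{\varindex\}$, $\onebranchset_{\child_0(\node)} = \onebranchset_\node$, and symmetrically $\zerobranchset_{\child_1(\node)} = \zerobranchset_\node$, $\onebranchset_{\child_1(\node)} = \onebranchset_\node \cup \{\varindex\}$. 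Substituting into the product formulas of~$\bset_\ints$ yields $\indvar_{\child_0(\node)} = (1-\decvar_\varindex)\,\indvar_\node$ and $\indvar_{\child_1(\node)} = \decvar_\varindex\,\indvar_\node$, whose sum telescopes to~$\indvar_\node$, giving~\eqref{eq:child-parent}.

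For the redundancy claim, the crucial observation is that for any non-root node~$\node$, applying~\eqref{eq:child-parent} at~$\parent(\node)$ yields $\indvar_{\parent(\node)} = \indvar_\node + \indvar_{\sibling(\node)}$. Since $\indvar \in \{0,1\}^{\card{\nodes}}$ implies $\indvar_{\sibling(\node)} \geq 0$, inequality~\eqref{eq:triv1} follows immediately. For~\eqref{eq:triv2}, I would argue that if $\node \in \zerolbset$, then $\node = \child_0(\parent(\node))$, so its sibling lies in~$\onelbset$ and shares the branching index~$\varindex(\node)$. The upper-bound constraint retained in the tightened set therefore gives $\indvar_{\sibling(\node)} \leq \decvar_{\varindex(\node)}$, and substituting into $\indvar_\node = \indvar_{\parent(\node)} - \indvar_{\sibling(\node)}$ yields exactly~\eqref{eq:triv2}. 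The argument for~\eqref{eq:triv3} is entirely symmetric, using the complementary constraint $\indvar_{\sibling(\node)} \leq 1 - \decvar_{\varindex(\node)}$ available for siblings in~$\zerolbset$.

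I do not anticipate any serious obstacle: the computations are elementary once the child-parent identity is in place. The only point requiring attention is the bookkeeping involved in tracking that the sibling of a~$\zerolbset$-node is a~$\onelbset$-node sharing the same branching variable (and conversely), and in recognising that the inequalities needed to absorb the removed constraints are precisely the \emph{upper-bound} constraints on the sibling, which remain in the definition of~$\bset_\lin$. No auxiliary lemma beyond~\eqref{eq:child-parent} itself appears necessary.
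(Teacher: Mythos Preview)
Your proposal is correct and follows essentially the same route as the paper: both establish~\eqref{eq:child-parent} via the factorizations $\indvar_{\child_0(\node)} = (1-\decvar_\varindex)\,\indvar_\node$ and $\indvar_{\child_1(\node)} = \decvar_\varindex\,\indvar_\node$ (you cite $\bset_\lin = \bset_\ints$ explicitly, the paper just writes down the recursive bilinear identities), and both derive the redundancy of~\eqref{eq:triv1}--\eqref{eq:triv3} from $\indvar_{\parent(\node)} = \indvar_\node + \indvar_{\sibling(\node)}$ combined with nonnegativity and the retained upper-bound constraint on the sibling.
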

\begin{proof}
  First, we prove the validity of Equation~\eqref{eq:child-parent}. Let~$\node \in \nodes \setminus \leaves$ be an arbitrary non-leaf node. Recall that for~$\child_0 (\node)$ and~$\child_1 (\node)$ we have~$\decvar_{\varindex(\child_0 (\node))} = 0$ and~$\decvar_{\varindex(\child_1 (\node))} = 1$. We may write
  \begin{align*}
    \indvar_{\child_0 (\node)} &= \indvar_{\node} \cdot (1 - \decvar_{\varindex (\child_0 (\node))}) , \\
    \indvar_{\child_1 (\node)} &= \indvar_{\node} \cdot \decvar_{\varindex (\child_1 (\node)) }.
  \end{align*}
  Summing both inequalities and noting that~$\varindex (\child_0 (\node)) = \varindex (\child_1 (\node))$ yields the desired result.

  Second, we show that inequalities~\eqref{eq:triv1}--\eqref{eq:triv3} are redundant when Equation~\eqref{eq:child-parent} is added. Equation~\eqref{eq:child-parent} states that~$\indvar_{\parent(\node)} = \indvar_{\node} + \indvar_{\sibling(\node)}$ for any node~$\node \in \nodes \setminus \{\rootn\}$, . Since~$\indvar_{\sibling(\node)} \ge 0$, this equality implies~$\indvar_{\node} \le \indvar_{\parent(\node)}$, making Inequality~\eqref{eq:triv1} redundant.

  To show that Inequality~\eqref{eq:triv2} is redundant, assume~$\node \in \zerolbset$. Its sibling~$\sibling(\node)$ must then be in~$\onelbset$. From the constraints of~$\bset_\lin$, we have~$\indvar_{\sibling(\node)} \le \decvar_{\varindex(\node)}$. Using Equation~\eqref{eq:child-parent}, we derive
  \begin{equation*}
      \indvar_\node = \indvar_{\parent(\node)} - \indvar_{\sibling(\node)} \ge \indvar_{\parent(\node)} - \decvar_{\varindex(\node)}.
  \end{equation*}
  This is precisely Inequality~\eqref{eq:triv2}. A symmetric argument holds for Inequality~\eqref{eq:triv3} when~$\node \in \onelbset$.
\end{proof}

Based on Proposition~\ref{prop:tightening}, we define the tightened, linearized binary polynomial set~$\bset_{\tight}$ by adding the valid equalities~\eqref{eq:child-parent} to~$\bset_\lin$ and removing the now-redundant inequalities:
\begin{equation}
    \label{eq:Btight}
    \bset_{\tight} \define \left \{ (\decvar, \indvar) \in \decvarset \times \{0, 1\}^{\card{\nodes}} : \
    \begin{aligned}
        \indvar_\rootn &= 1,\\
        \indvar_\node
        &\leq 1 - \decvar_{\varindex (\node) } , \quad \forall \node \in \zerolbset, \\
        \indvar_\node
        &\leq  \decvar_{\varindex (\node) } , \quad \forall \node \in \onelbset, \\
        z_\node &= z_{\child_1 (\node)} + z_{\child_2 (\node)}, \quad \forall \node \in \nodes \setminus \leaves,
    \end{aligned}
    \right\}.
\end{equation}

\begin{remark}
  The definition of~$\bset_\tight$ in Equation~\eqref{eq:Btight} resembles a network design formulation~\cite{crainic2020fixed}. The~$\indvar$ variables characterise the flow in the network whereas the~$\decvar$ variables model the design decisions.
\end{remark}

\begin{remark}
    \label{rem:sumone}
    Proposition~\ref{prop:tightening} implies the following. For any node~$\node \in \nodes \setminus \leaves$, we have
    \begin{equation*}
        \indvar_\node = \sum_{\othernode \in \leaves_\node} \indvar_\othernode,
    \end{equation*}
    where~$\leaves_\node \subseteq \leaves$ is the set of leaf nodes descending from~$\node$. As a result, all intermediate variables~$\indvar_\node$ for~$\node \in \nodes \setminus \leaves$ can be eliminated from the formulation of~$\bset_\tight$ by substitution.
\end{remark}


We now define the linearized and tightened branching-based outer approximation as:
\begin{equation*}
    \appset_{\treeind, \lin} \define \msset \cap \bset_\tight.
  \end{equation*}

  Finally, we compare the strength of the continuous relaxation~$\appsetLP_{\treeind, \lin}$ of~$\appset_{\treeind, \lin}$ with the disjunctive approximation~$\disjouterapprox$.
  \begin{proposition}
    \label{prop:balas-in-tree}
The projection of the set~$\PTwoLP$ in the~$\decvar$ space is weaker than the set~$\disjouterapprox$. Specifically,

\begin{equation*}
  \proj_\decvar(\appsetLP_{\balasind, \lin}) = \disjouterapprox
  \subseteq
  \proj_\decvar(\PTwoLP).
\end{equation*}

\end{proposition}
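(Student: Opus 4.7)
\medskip

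\noindent\textbf{Proof plan.} The first equality $\proj_\decvar(\appsetLP_{\balasind,\lin}) = \disjouterapprox$ is exactly Theorem~\ref{thm:balas-nb-vars} applied to the continuous relaxation, so the substance lies in establishing the inclusion $\disjouterapprox \subseteq \proj_\decvar(\PTwoLP)$. The plan is to pick an arbitrary $\decvar \in \disjouterapprox$, lift it to the Balas extended formulation to obtain auxiliary variables $(y^\node, y_0^\node)_{\node \in \leaves}$ satisfying the defining constraints of $\appsetLP_{\balasind, \lin}$ in~\eqref{eq:projset-bal}, and then use these to construct an $\indvar$-vector such that $(\decvar, \indvar) \in \PTwoLP = \hat{\msset} \cap \hat{\bset}_{\tight}$.

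\medskip

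\noindent\textbf{Construction of $\indvar$.} For each leaf $\node \in \leaves$, I would set $\indvar_\node \define y_0^\node$, and for each non-leaf node $\node \in \nodes \setminus \leaves$, I would set
\begin{equation*}
  \indvar_\node \define \sum_{\othernode \in \leaves_\node} y_0^\othernode,
\end{equation*}
where $\leaves_\node$ denotes the set of leaves descending from $\node$ (as in Remark~\ref{rem:sumone}). This construction automatically satisfies $\indvar_\rootn = \sum_{\othernode \in \leaves} y_0^\othernode = 1$, the nonnegativity $\indvar_\node \geq 0$, and the flow balance $\indvar_\node = \indvar_{\child_0(\node)} + \indvar_{\child_1(\node)}$ of Equation~\eqref{eq:child-parent} via the disjoint decomposition $\leaves_\node = \leaves_{\child_0(\node)} \sqcup \leaves_{\child_1(\node)}$.

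\medskip

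\noindent\textbf{Verifying the $\msset$ constraint.} Since the atom constraints in~\eqref{eq:projset-bal} encode $\objvect^\top y^\node \geq \lpval_\node\, y_0^\node$ for each leaf $\node$, summing over $\node \in \leaves$ and using $\sum_\node y^\node = \decvar$ yields
\begin{equation*}
  \objvect^\top \decvar \;=\; \sum_{\node \in \leaves} \objvect^\top y^\node \;\geq\; \sum_{\node \in \leaves} \lpval_\node\, y_0^\node \;=\; \sum_{\node \in \leaves} \lpval_\node\, \indvar_\node,
\end{equation*}
which is precisely the defining inequality of $\hat{\msset}$.

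\medskip

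\noindent\textbf{Verifying the branching inequalities of $\hat{\bset}_{\tight}$.} This is the step that requires the most care and is the main obstacle. Fix a non-root node $\node$ with branching variable $\varindex = \varindex(\node)$. For any leaf $\othernode \in \leaves_\node$, the atom constraints force $y^\othernode_\varindex = 0$ if $\node \in \zerolbset$ and $y^\othernode_\varindex = y_0^\othernode$ if $\node \in \onelbset$, since all descendants of $\node$ inherit the branching fixing at $\node$. For the remaining leaves $\othernode \in \leaves \setminus \leaves_\node$, the relaxed variable bounds give only $0 \leq y_\varindex^\othernode \leq y_0^\othernode$. If $\node \in \zerolbset$, then
\begin{equation*}
  \decvar_\varindex \;=\; \sum_{\othernode \in \leaves} y^\othernode_\varindex \;=\; \sum_{\othernode \in \leaves \setminus \leaves_\node} y^\othernode_\varindex \;\leq\; \sum_{\othernode \in \leaves \setminus \leaves_\node} y_0^\othernode \;=\; 1 - \indvar_\node,
\end{equation*}
which rearranges to $\indvar_\node \leq 1 - \decvar_\varindex$. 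Symmetrically, if $\node \in \onelbset$, then
\begin{equation*}
  \decvar_\varindex \;=\; \sum_{\othernode \in \leaves} y^\othernode_\varindex \;\geq\; \sum_{\othernode \in \leaves_\node} y^\othernode_\varindex \;=\; \sum_{\othernode \in \leaves_\node} y_0^\othernode \;=\; \indvar_\node.
\end{equation*}
This delivers the two inequality families in $\hat{\bset}_{\tight}$. Combined with the already verified items, $(\decvar, \indvar) \in \PTwoLP$, and hence $\decvar \in \proj_\decvar(\PTwoLP)$, completing the proof. The main delicacy is identifying exactly which leaves force $y_\varindex^\othernode$ to take the extremal value $0$ or $y_0^\othernode$ (those in $\leaves_\node$) versus which only satisfy the relaxed bound, and it is this split that makes the branching inequalities tight at the extended-formulation level.
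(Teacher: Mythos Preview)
Your proof is correct and takes a genuinely different route from the paper's.

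The paper argues separately that $\disjouterapprox \subseteq \proj_\decvar(\hat{\msset})$ and $\disjouterapprox \subseteq \proj_\decvar(\hat{\bset}_\tight)$, the first via the trivial bound $\objvect^\top \decvar \geq \min_{\node}\lpval_\node$, the second by showing each (modified) atom lies in $\proj_\decvar(\hat{\bset}_\tight)$ with a $\{0,1\}$-valued $\indvar$ and then taking convex hulls. You instead lift $\decvar$ once to the Balas variables $(y^\node, y_0^\node)$ and build a \emph{single} $\indvar$ via $\indvar_\node = \sum_{\othernode \in \leaves_\node} y_0^\othernode$ that satisfies $\hat{\msset}$ and $\hat{\bset}_\tight$ simultaneously. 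Your verification of the branching inequalities is exactly right: for $\node \in \zerolbset$ (resp.\ $\onelbset$) with branching index $\varindex$, every descendant leaf $\othernode \in \leaves_\node$ has $\varindex \in \zerobranchset_\othernode$ (resp.\ $\onebranchset_\othernode$), so the homogenized atom constraints pin $y_\varindex^\othernode$ to $0$ (resp.\ $y_0^\othernode$), while for the remaining leaves the relaxed binary bound $0 \leq y_\varindex^\othernode \leq y_0^\othernode$ suffices.

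What your approach buys is that it sidesteps a delicate point in the paper's decomposition: in general $\proj_\decvar(A \cap B) \subseteq \proj_\decvar(A) \cap \proj_\decvar(B)$ with inclusion possibly strict, so producing \emph{different} $\indvar$-witnesses for $\hat{\msset}$ and $\hat{\bset}_\tight$ does not immediately yield a common witness for their intersection. Your unified construction makes this issue disappear. The paper's approach, on the other hand, has the expository advantage of isolating which constraints of $\PTwoLP$ are implied by which features of the atoms (dual bound vs.\ branching fixings).
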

\begin{proof}
  We have~$\appsetLP_{\treeind, \lin} = \PTwoMsetLP \cap \PTwoBsetLP$.  The projection of an intersection is a subset of the intersection of the projections, \ie,~$\proj_\decvar(\PTwoLP) = \proj_\decvar(\PTwoMsetLP) \cap \proj_\decvar(\PTwoBsetLP)$. it suffices to show that~$\disjouterapprox \subseteq \proj_\decvar(\PTwoMsetLP)$ and~$\disjouterapprox \subseteq \proj_\decvar(\PTwoBsetLP)$.

First, we show~$\disjouterapprox \subseteq \proj_\decvar(\PTwoMsetLP)$. We have
\begin{equation*}
  \disjouterapprox
  \subseteq \conv \left( \bigcup_{\node \in \leaves}
    \left\{ \decvar \in \decvarsetLP :
        \objvect^\top \decvar \geq \lpval_\node
    \right\} \right) \subseteq
  \{\decvar \in \decvarsetLP :
  \objvect^\top \decvar \geq \min_{\node \in \leaves} \{ \lpval_\node \} \} \subseteq
  \proj_\decvar(\PTwoMsetLP),
\end{equation*}
where~$\decvarsetLP$ is the continuous relaxation of~$\decvarset$.

Second, we show~$\disjouterapprox \subseteq \proj_\decvar(\PTwoBsetLP)$. For any leaf node~$\node \in \leaves$, we introduce the modified atom
\begin{equation*}
  \modatom_\node = \left\{ \decvar \in \decvarsetLP :
    \begin{array}{ll}
      \decvar_{\binindex} = 0, & \text{for } \binindex \in \zerobranchset_{\node}, \\
      \decvar_{\binindex} = 1, & \text{for } \binindex \in \onebranchset_{\node},
    \end{array}
    \right\},
\end{equation*}
which is a relaxation of the original atom~$\atom_\node$ defined in Equation~\eqref{eq:atom-def}, \ie,
\begin{equation*}
 \atom_\node \subseteq \modatom_\node.
\end{equation*}

Let an arbitrary point~$\decvar \in \modatom_\node$ be selected. Next, we construct~$\indvar \in \{0, 1\}^{\card{\nodes}}$ and show that it satisfies~$\indvar \in \bsetLP_{\tight}$. We set~$\indvar_\othernode = 1$ for any node~$\othernode \in \nodes \setminus \{\node\}$ located on the unique path between the root node~$\rootn$ and the leaf node~$\node$, otherwise~$\indvar_\othernode = 0$. We have~$(\decvar, \indvar) \in \bsetLP_{\tight}$. Consequently, any point~$\decvar \in \bigcup_{\node \in \leaves} \modatom_{\node}$ satisfies~$\decvar \in \proj_\decvar(\bsetLP_{\tight})$.
Finally, we can state the set of inclusions

\begin{equation*}
  \disjouterapprox
  \subseteq
  \conv \left( \bigcup_{\node \in \leaves}
    \modatom_{\node} \right)
  \subseteq
  \mathrm{conv} (\proj_\decvar(\bsetLP_{\tight}))
  =
  \proj_\decvar (\mathrm{conv}(\bsetLP_{\tight}))
  =
  \proj_\decvar(\PTwoBsetLP),
\end{equation*}
where the last equality holds because~$\bsetLP_{\tight}$ is a convex set.
\end{proof}

Proposition~\ref{prop:balas-in-tree} states that the CGLP associated with~$\appsetLP_{\balasind, \lin}$ necessarily produces valid inequalities that are at least as strong as those from the \CGLP associated with~$\appsetLP_{\treeind, \lin}$. However, the two \CGLP do not scale in the same manner, as we further discuss.

\subsection{Cut Generation}
\label{sec:cut-generation-2}

We now discuss the generation of inequalities parameterized in~$\decvar$ and valid for the continuous relaxation~$\appsetLP_{\treeind, \lin}$. The auxiliary variables~$\indvar$ are projected out and the \CGLP of Section~\ref{sec:cut-separation} is used to generate valid inequalities.

After substituting the non-leaf indicator variables as described in Remark~\ref{rem:sumone}, the mathematical formulation of~$\appsetLP_{\treeind, \lin}$ has~$\card{\leaves}$ auxiliary variables. We now count the number of linear constraints of~$\appsetLP_{\treeind, \lin}$. First, the constraints necessary to bound the variables in~$\decvarsetLP$ equals~$2\card{\binaryset} + o$ for the $x$ variables, where we recall that $o$ corresponds to the number of upper and lower bound constraints necessary for bounding the continuous variables. The number of constraints necessary to bound the~$\indvar$ variables in~$\decvarsetLP$ equals~$\card{\leaves}$. Second, the set~$\msset$ introduces one constraint. Third, the set~$\bset_\tight$ introduces
\begin{equation}
  1 + \card{\zerolbset} + \card{\onelbset}  = \card{\nodes} = 2 \card{\leaves} - 1 \label{eq:nb-cons-bset}
\end{equation}
constraints. The equality in Equation~\eqref{eq:nb-cons-bset} holds because the branch-and-bound tree is assumed to have a full binary structure (see Section~\ref{sec:notation}). For such a tree, the number of internal nodes and leaves are related by~$\card{\nodes} = 2 \card{\leaves} - 1$, and the branches are divided equally, so~$\card{\zerolbset} = \card{\onelbset} = (\card{\nodes} - 1)/2$~\cite{bondy1976graph}.

This analysis, combined with the general discussion in Section~\ref{sec:cut-separation} leads to the following observation.
\begin{observation}
  \label{obs:size-tree}
  The \CGLP associated with the extended formulation~$\appsetLP_{\treeind, \lin}$ requires the construction of~$\order(\card{\leaves})$ variables and~$\order(\card{\leaves} + \card{\binaryset} + o)$ constraints.
\end{observation}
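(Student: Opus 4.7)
The plan is a bookkeeping exercise that specializes the general CGLP size formula of Section~\ref{sec:cut-separation} to the extended formulation~$\appsetLP_{\treeind, \lin} = \msset \cap \bset_\tight$. That formula expresses the CGLP dimensions in terms of~$\indvarsize$, the number of auxiliary variables of the extended formulation, and~$\gennbcons$, its number of linear constraints. It therefore suffices to evaluate these two integers for~$\appsetLP_{\treeind, \lin}$.

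First, I would handle the auxiliary variables. By Remark~\ref{rem:sumone}, every internal indicator~$\indvar_\node$ with~$\node \in \nodes \setminus \leaves$ satisfies~$\indvar_\node = \sum_{\othernode \in \leaves_\node} \indvar_\othernode$ and can therefore be eliminated by substitution, leaving exactly one indicator per leaf. Hence~$\indvarsize = \card{\leaves}$.

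Second, I would enumerate the constraints of~$\appsetLP_{\treeind, \lin}$ by origin. The bounds inherited from~$\decvarsetLP$ yield~$2\card{\binaryset} + o$ constraints on~$\decvar$ and~$\card{\leaves}$ constraints on the leaf indicators. The set~$\msset$ contributes the single aggregated dual-bound inequality~$\objvect^\top \decvar \ge \sum_{\node \in \leaves} \lpval_\node \indvar_\node$. After the substitution of Remark~\ref{rem:sumone}, the child-parent equalities~$z_\node = z_{\child_0(\node)} + z_{\child_1(\node)}$ in~$\bset_\tight$ collapse to tautologies and disappear, leaving the root fixing~$\indvar_\rootn = 1$ together with the two families of variable-indicator inequalities from~$\bset_\tight$. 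These sum to~$1 + \card{\zerolbset} + \card{\onelbset} = \card{\nodes}$, which by the full-binary-tree identity~$\card{\nodes} = 2\card{\leaves} - 1$ from~\cite{bondy1976graph}, legitimate under the assumption of Section~\ref{sec:notation}, equals~$2\card{\leaves} - 1$. Summing all contributions gives~$\gennbcons = 3\card{\leaves} + 2\card{\binaryset} + o = \order(\card{\leaves} + \card{\binaryset} + o)$.

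Plugging~$\indvarsize = \order(\card{\leaves})$ and~$\gennbcons = \order(\card{\leaves} + \card{\binaryset} + o)$ into the CGLP size formula of Section~\ref{sec:cut-separation} then produces the claimed orders. The argument is routine; the only slightly delicate point is to carry out the substitution of Remark~\ref{rem:sumone} \emph{before} counting, since otherwise the intermediate-node indicators and the child-parent equalities would simultaneously inflate both~$\indvarsize$ and~$\gennbcons$ by the same additive~$\card{\nodes} - \card{\leaves} = \card{\leaves} - 1$ amount, leaving the asymptotic orders unchanged but rendering the bookkeeping noticeably messier.
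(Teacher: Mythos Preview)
Your proposal is correct and follows essentially the same bookkeeping as the paper: substitute out the internal-node indicators via Remark~\ref{rem:sumone} to get $\indvarsize = \card{\leaves}$, then tally the constraint contributions from $\decvarsetLP$, $\msset$, and $\bset_\tight$ (using the full-binary-tree identity $\card{\nodes} = 2\card{\leaves}-1$) to obtain $\gennbcons = 3\card{\leaves} + 2\card{\binaryset} + o$, and finally invoke the generic CGLP size formula. Your remark that the child--parent equalities become tautologies after the substitution is a nice explicit justification the paper leaves implicit.
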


A comparison between Observation~\ref{obs:size-tree} and Observation~\ref{rem:size-balas} reveals that the \CGLP for~$\appsetLP_{\treeind, \lin}$ scales more favorably compared to the \CGLP of~$\appsetLP_{\balasind, \lin}$ when the number of leaf nodes and original~$\decvar$ variables increase. Nevertheless, this computational advantage comes at a price. As established in Proposition~\ref{prop:balas-in-tree}, the resulting inequalities are provably weaker than those derived from~$\disjouterapprox$, though our numerical results in Section~\ref{sec:numerical-analysis} indicate that the increased separation speed compensates this drawback.


\section{A Mixing-Set based Outer-Approximation}
\label{sec:valid-inequalities}

This section introduces a third outer approximation of the set~$\feasset$. We first define this outer approximation using a mixing set. We then prove that this new formulation is equivalent to the branching-based outer approximation from Section~\ref{sec:mixing-set-outer-approximation}. Subsequently, we study the relationship between the continuous relaxations of these two sets. Finally, we analyze the size of the corresponding \CGLP. This analysis reveals that the new approximation offers no computational advantage over the branching-based approach. This observation motivates the development presented in Section~\ref{sec:star-ineq-insp} where the mixing-set is taken advantage of to construct a family of valid inequalities separated in polynomial time.

\subsection{Definition and Validity of the Outer Approximation}
Let $ \minlp = \min_{\node \in \leaves} (\lpval_\node)$ be the minimum of the dual bounds over all leaf nodes of the \BB tree. The value~$\minlp$ is a valid lower bound on the optimal objective value of Problem~\eqref{eq:generic-mip}. We define the mixing set~$\nmsset$ as follows:
\begin{equation}
    \label{eq:mix-set-ref}
    \nmsset \define \bigg \{(\decvar, \indvar ) \in \decvarset \times \{0, 1\}^{\card{\nodes}}: \
    \objvect^\top \decvar + (\minlp - \lpval_\node)\indvar_\node\geq \minlp, \ \node \in \nodes \bigg \}.
\end{equation}
The set describes a \textit{strengthened} mixing set in the sense of~\textcite{luedtke2010integer}. We skip the definition of the set~$\appset_\msind$ and directly define the linearized outer approximation~$\appset_{\msind, \lin}$. It is constructed as the intersection of the set~$\nmsset$ and the set~$\bset_\tight$ defined in Equations~\eqref{eq:mix-set-ref} and~\eqref{eq:Btight}, such that
\begin{equation*}
  \appset_{\msind , \lin} \define \bset_\tight \cap \nmsset.
\end{equation*}

The following proposition establishes the equivalence of~$\appset_{\msind , \lin}$ with~$\appset_{\treeind, \lin}$.

\begin{proposition}
  The sets~$\appset_{\treeind, \lin}$ and~$\appset_{\msind, \lin}$ are equivalent; that is,
  \begin{equation*}
    \appset_{\treeind, \lin} = \appset_{\msind, \lin }.
  \end{equation*}
\end{proposition}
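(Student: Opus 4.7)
\textit{Plan.} The approach is to exploit that $\appset_{\treeind, \lin} = \msset \cap \bset_\tight$ and $\appset_{\msind, \lin} = \bset_\tight \cap \nmsset$ share the factor $\bset_\tight$, reducing the task to comparing, on points of $\bset_\tight$, the single inequality defining $\msset$ with the family defining $\nmsset$. The starting point is the combinatorial structure that $\bset_\tight$ forces on $\indvar$: combining $\indvar_\rootn = 1$, the parent-child identity $\indvar_\node = \indvar_{\child_0(\node)} + \indvar_{\child_1(\node)}$, and binarity of $\indvar$, a straightforward depth induction shows that for every $(\decvar,\indvar) \in \bset_\tight$ exactly one leaf $\node^* \in \leaves$ satisfies $\indvar_{\node^*} = 1$, and $\indvar_\node = 1$ precisely for $\node$ on the unique $\rootn$-to-$\node^*$ path; in particular $\sum_{\node \in \leaves}\indvar_\node = 1$, as already noted in Remark~\ref{rem:sumone}.

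Next I would rewrite both constraint systems under this one-hot description. The $\msset$-inequality collapses to $\objvect^\top \decvar \geq \lpval_{\node^*}$, while each $\nmsset$-inequality, indexed by $\node \in \nodes$, rearranges to $\objvect^\top \decvar \geq \minlp + (\lpval_\node - \minlp)\indvar_\node$ and therefore reduces either to $\objvect^\top \decvar \geq \lpval_\node$ when $\node$ lies on the path to $\node^*$, or to the trivial $\objvect^\top \decvar \geq \minlp$ otherwise. Reading off the $\nmsset$-constraint at $\node = \node^*$ immediately delivers $\appset_{\msind, \lin} \subseteq \appset_{\treeind, \lin}$, since it coincides with the reduced $\msset$-constraint.

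The remaining and more delicate direction, $\appset_{\treeind, \lin} \subseteq \appset_{\msind, \lin}$, requires that $\objvect^\top \decvar \geq \lpval_{\node^*}$ imply $\objvect^\top \decvar \geq \lpval_\node$ for every strict ancestor $\node$ of $\node^*$. I expect this monotonicity step to be the main obstacle. It is resolved by observing that the subproblem at any descendant is a restriction of that at its ancestor, so a valid dual bound at an ancestor remains valid at any descendant; hence one may assume without loss of generality, via the propagation $\lpval_\node \gets \max(\lpval_\node, \lpval_{\parent(\node)})$, that the dual bounds are non-decreasing along root-to-leaf paths. Under this monotonicity, every $\nmsset$-constraint at a node on the path is implied by the one at $\node^*$, and every off-path constraint reduces to $\objvect^\top \decvar \geq \minlp \leq \lpval_{\node^*}$, closing the forward inclusion and hence the equality.
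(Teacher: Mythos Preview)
Your proposal is correct and follows essentially the same approach as the paper: both reduce the comparison to points of $\bset_\tight$, use that the binary parent--child identities force a one-hot structure on the leaf indicators so that the $\msset$-inequality collapses to $\objvect^\top \decvar \geq \lpval_{\node^*}$, and then check the $\nmsset$-inequalities by casing on whether a node lies on the root-to-$\node^*$ path, invoking monotonicity of the dual bounds for ancestors. The paper simply asserts this monotonicity as a property of branch-and-bound trees, whereas you are slightly more careful in noting it can be enforced without loss of generality via propagation; otherwise the arguments are the same.
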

\begin{proof}
  Let $(\decvar, \indvar) \in \bset_\tight$. We show that for any such point, the sets~$\msset$ and~$\nmsset$ are equal.

  By assumption, the \BB tree yields a disjoint partition of the feasible space (see Section~\ref{sec:notation}). Let~$\othernode \in \leaves$ be the unique leaf node for which~$\indvar_\othernode = 1$. We also have~$\sum_{\node \in \leaves} \indvar_\node = 1$.

First, consider the defining inequality for the set~$\msset$ given in Equation~\eqref{eq:set-M}. Since~$\indvar_{\othernode}=1$ and~$\indvar_{\node}=0$ for all~$\node \in \leaves \setminus \{\othernode\}$, this inequality simplifies to
\begin{equation}
  \label{eq:original-best-bnd}
  \objvect^\top \decvar \geq  \lpval_{\othernode}.
\end{equation}

Second, consider the inequalities defining~$\nmsset$ given in Equation~\eqref{eq:mix-set-ref}. We analyze these inequalities by considering four cases.\\
   \textit{Case 1.} The considered node is~$\node = \othernode$, which satisfies~$\indvar_\othernode = 1$. The associated constraint simplifies to the same inequality as Inequality~\eqref{eq:original-best-bnd}:
   \begin{equation}
     \objvect^\top \decvar \geq  \lpval_{\othernode}.
     \label{eq:best-bnd}
   \end{equation}
   \textit{Case 2.} The considered node is~$\node \in \leaves \setminus \{\othernode\}$. We necessarily have that~$\indvar_\othernode = 0$. The associated constraint reduces to
   \begin{equation*}
     \objvect^\top \decvar \geq  \minlp.
   \end{equation*}
   This constraint is dominated by Inequality~\eqref{eq:best-bnd}, since~$\lpval_\othernode \geq \minlp$ by definition.\\
   \textit{Case 3.} The considered node is~$\node \in \nodes \setminus \leaves$ and~$\indvar_\node = 1$. The condition~$\indvar_\node=1$ implies that node~$\node$ lies on the unique path from the root node to the leaf node~$\othernode$. In that case, the associated constraint reduces to
   \begin{equation*}
     \objvect^\top \decvar \geq  \lpval_\node
   \end{equation*}
   and is dominated by Equation~\eqref{eq:best-bnd} because $ \lpval_\othernode \geq  \lpval_\node $ always holds by monotonicity of the dual bounds of BB trees.\\
   \textit{Case 4.} The considered node is~$\node \in \nodes \setminus \leaves$ and~$\indvar_\node = 0$. As in Case~2, the constraint becomes
   \begin{equation*}
     \objvect^\top \decvar \geq  \minlp
   \end{equation*}
   and is dominated by Equation~\eqref{eq:best-bnd}.
 \end{proof}

We now compare the continuous relaxations~$\appsetLP_{\treeind, \lin}$ and~$\appsetLP_{\msind, \lin}$.

\begin{proposition}
  \label{prop:domi1}
  For any point~$(\decvar,\indvar)$ in the continuous relaxation~$\bsetLP_\tight$, the inequality
  \begin{equation*}
    \objvect^\top \decvar \geq  \sum_{\node \in \leaves} \lpval_\node\indvar_\node
  \end{equation*}
  dominates the inequality
  \begin{equation*}
    \objvect^\top \decvar \geq \minlp + (\lpval_\othernode - \minlp)\indvar_\othernode
  \end{equation*}
  for any~$\othernode \in \nodes$.
\end{proposition}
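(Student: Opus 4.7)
The plan is to show directly that, under the defining equalities and inequalities of $\bsetLP_\tight$, the left-hand side of the first inequality is pointwise at least as large as the left-hand side of the second. Concretely, it suffices to prove that for every $(\decvar,\indvar)\in\bsetLP_\tight$ and every $\othernode\in\nodes$,
\begin{equation*}
  \sum_{\node\in\leaves}\lpval_\node\indvar_\node
  \;\geq\;
  \minlp + (\lpval_\othernode-\minlp)\,\indvar_\othernode.
\end{equation*}
Because both candidate inequalities have the common left-hand side $\objvect^\top\decvar$, this pointwise comparison is exactly what is needed for the first to imply the second.

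The first step is to reduce everything to the leaf indicators. Using Remark~\ref{rem:sumone}, the flow-like equalities $\indvar_\node = \indvar_{\child_0(\node)}+\indvar_{\child_1(\node)}$ together with $\indvar_\rootn=1$ give, by iteration,
\begin{equation*}
  \indvar_\node = \sum_{\othernode\in\leaves_\node}\indvar_\othernode
  \quad\text{for all }\node\in\nodes,
  \qquad
  \sum_{\node\in\leaves}\indvar_\node = 1,
\end{equation*}
where $\leaves_\node$ denotes the set of leaves descending from $\node$ (with $\leaves_\node=\{\node\}$ if $\node\in\leaves$). Combined with $\indvar\geq 0$, which follows from the continuous relaxation of the binary constraints, this lets me treat $(\indvar_\node)_{\node\in\leaves}$ as a convex combination.

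The second step is a case split on $\othernode$. If $\othernode\in\leaves$, I separate the single term $\lpval_\othernode\indvar_\othernode$ from the sum and bound the rest below using $\lpval_\node\geq\minlp$ for every leaf $\node$:
\begin{equation*}
  \sum_{\node\in\leaves}\lpval_\node\indvar_\node
  \geq \lpval_\othernode\indvar_\othernode + \minlp\!\!\sum_{\node\in\leaves\setminus\{\othernode\}}\!\!\indvar_\node
  = \lpval_\othernode\indvar_\othernode + \minlp(1-\indvar_\othernode),
\end{equation*}
which is exactly the required right-hand side. If $\othernode\in\nodes\setminus\leaves$, I split the sum as $\leaves=\leaves_\othernode\cup(\leaves\setminus\leaves_\othernode)$, use $\lpval_\node\geq\lpval_\othernode$ for $\node\in\leaves_\othernode$ (which is the monotonicity of dual bounds down the tree, the same property already invoked in Case~3 of the preceding proof) and $\lpval_\node\geq\minlp$ for the remaining leaves, and then substitute $\sum_{\node\in\leaves_\othernode}\indvar_\node=\indvar_\othernode$ and $\sum_{\node\in\leaves\setminus\leaves_\othernode}\indvar_\node=1-\indvar_\othernode$ to recover the same expression.

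I do not anticipate a serious obstacle: the argument is essentially a weighted-average bound that uses only the flow equalities of $\bset_\tight$ and monotonicity of the dual bounds. The one place to be slightly careful is the non-leaf case, where the identity $\sum_{\node\in\leaves_\othernode}\indvar_\node=\indvar_\othernode$ must be derived from Remark~\ref{rem:sumone} rather than from binarity; since that remark is proved in the integer setting but is in fact a direct consequence of iterating Equation~\eqref{eq:child-parent}, it is equally valid in the continuous relaxation $\bsetLP_\tight$, and I would state this explicitly at the start of the proof.
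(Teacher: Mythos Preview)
Your proposal is correct and essentially mirrors the paper's own argument: both proofs use the identities $\sum_{\node\in\leaves}\indvar_\node=1$ and $\indvar_\othernode=\sum_{\node\in\leaves_\othernode}\indvar_\node$ from Remark~\ref{rem:sumone}, together with $\lpval_\node\geq\lpval_\othernode$ for $\node\in\leaves_\othernode$ and $\lpval_\node\geq\minlp$ for all leaves, to obtain the required pointwise comparison. The only cosmetic difference is that the paper first rewrites $\sum_{\node\in\leaves}\lpval_\node\indvar_\node=\minlp+\sum_{\node\in\leaves}(\lpval_\node-\minlp)\indvar_\node$ and then treats all $\othernode\in\nodes$ uniformly, whereas you do an (unnecessary) case split on whether $\othernode$ is a leaf---your non-leaf argument already covers the leaf case since $\leaves_\othernode=\{\othernode\}$ when $\othernode\in\leaves$.
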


\begin{proof}
  First, we rewrite the expression~$\sum_{\node \in \leaves} \lpval_\node\indvar_\node$. Using the property~$\sum_{\node \in \leaves} \indvar_\node = 1$ discussed in Remark~\ref{rem:sumone}, we have
  \begin{align}
     \sum_{\node \in \leaves} \lpval_\node\indvar_\node
    &= \sum_{\node \in \leaves} (\minlp + \lpval_\node - \minlp) \indvar_\node \nonumber \\
    &= \minlp \left(\sum_{\node \in \leaves} \indvar_\node\right) + \sum_{\node \in \leaves}(\lpval_\node - \minlp) \indvar_\node \nonumber \\
    &= \minlp  + \sum_{\node \in \leaves}(\lpval_\node - \minlp) \indvar_\node. \label{eq:inter-bnd1}
  \end{align}

  Next, we show that for any node~$\othernode \in \nodes$, the following inequality holds:
  \begin{equation}
    \label{eq:bound-announcement}
    \sum_{\node \in \leaves}(\lpval_\node - \minlp) \indvar_\node \geq (\lpval_\othernode - \minlp)  \indvar_\othernode.
  \end{equation}

  The set~$\leaves_\othernode$ denote the set of leaf nodes in the subtree rooted at~$\othernode$. From Remark~\ref{rem:sumone}, we know that~$\indvar_\othernode = \sum_{\node \in \leaves_\othernode} \indvar_\node$. Moreover, the dual bounds are non-decreasing, so~$\lpval_\node \geq \lpval_\othernode$ for all~$\node \in \leaves_\othernode$. We can therefore write
  \begin{align}
    \sum_{\node \in \leaves}(\lpval_\node - \minlp) \indvar_\node
    &\geq \sum_{\node \in \leaves_\othernode}(\lpval_\node - \minlp) \indvar_\node \nonumber \\
    &\geq \sum_{\node \in \leaves_\othernode}(\lpval_\othernode - \minlp) \indvar_\node  \nonumber \\
    &= (\lpval_\othernode - \minlp) \sum_{\node \in \leaves_\othernode} \indvar_\node \nonumber\\
    &= (\lpval_\othernode - \minlp)  \indvar_\othernode. \label{eq:inter-bnd2}
  \end{align}
  Combining the result from Equation~\eqref{eq:inter-bnd1} with Inequality~\eqref{eq:inter-bnd2} yields the desired dominance result.
\end{proof}

\begin{corollary}
  \label{cor:treeinms}
  The continuous relaxation~$\appsetLP_{\treeind, \lin}$ is a subset of the continuous relaxation~$\appsetLP_{\msind, \lin}$; that is,
  \begin{equation*}
    \appsetLP_{\treeind, \lin } \subseteq \appsetLP_{\msind, \lin }.
  \end{equation*}
\end{corollary}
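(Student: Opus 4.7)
The plan is to reduce the corollary to a direct application of Proposition~\ref{prop:domi1}, using the fact that both continuous relaxations share the common factor $\bsetLP_\tight$. Concretely, I would unfold the definitions
\begin{equation*}
  \appsetLP_{\treeind,\lin} = \bsetLP_\tight \cap \hat{\msset}, \qquad
  \appsetLP_{\msind,\lin} = \bsetLP_\tight \cap \nmssetLP,
\end{equation*}
and observe that it suffices to show the implication: every $(\decvar,\indvar) \in \bsetLP_\tight \cap \hat{\msset}$ also lies in $\nmssetLP$.

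First I would take an arbitrary $(\decvar,\indvar) \in \appsetLP_{\treeind,\lin}$. By definition this means $(\decvar,\indvar) \in \bsetLP_\tight$ and it satisfies the single defining inequality of $\hat{\msset}$, namely
\begin{equation*}
  \objvect^\top \decvar \geq \sum_{\node \in \leaves} \lpval_\node \indvar_\node.
\end{equation*}
The target is to verify the family of inequalities
\begin{equation*}
  \objvect^\top \decvar + (\minlp - \lpval_\othernode)\indvar_\othernode \geq \minlp, \qquad \othernode \in \nodes,
\end{equation*}
defining $\nmssetLP$. Rewriting each of these as $\objvect^\top \decvar \geq \minlp + (\lpval_\othernode - \minlp)\indvar_\othernode$ makes them precisely the dominated inequalities appearing in Proposition~\ref{prop:domi1}.

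Now I would invoke Proposition~\ref{prop:domi1}, which was established exactly for points in $\bsetLP_\tight$: for every such point and every $\othernode \in \nodes$, the bound $\objvect^\top \decvar \geq \sum_{\node \in \leaves} \lpval_\node \indvar_\node$ implies $\objvect^\top \decvar \geq \minlp + (\lpval_\othernode - \minlp)\indvar_\othernode$. Applying this to our chosen $(\decvar,\indvar)$ for all $\othernode \in \nodes$ shows $(\decvar,\indvar) \in \nmssetLP$, hence $(\decvar,\indvar) \in \bsetLP_\tight \cap \nmssetLP = \appsetLP_{\msind,\lin}$, which gives the desired inclusion.

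I do not anticipate any real obstacle here: all the technical content, in particular the use of $\sum_{\node \in \leaves}\indvar_\node = 1$ from Remark~\ref{rem:sumone} and the monotonicity of the dual bounds along root-to-leaf paths, has already been absorbed into Proposition~\ref{prop:domi1}. The only subtlety to keep in mind when writing the proof is the algebraic rewriting that turns the $\nmsset$ inequality into the exact form dominated by Proposition~\ref{prop:domi1}, so that the corollary is truly a one-line consequence of the previous result.
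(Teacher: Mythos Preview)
Your proposal is correct and follows essentially the same approach as the paper's own proof: take an arbitrary point in $\appsetLP_{\treeind,\lin} = \bsetLP_\tight \cap \hat{\msset}$, note it satisfies the single inequality defining $\hat{\msset}$, invoke Proposition~\ref{prop:domi1} to conclude that all the inequalities defining $\nmssetLP$ are satisfied, and hence the point lies in $\bsetLP_\tight \cap \nmssetLP = \appsetLP_{\msind,\lin}$. The paper's version is slightly terser but the structure and the key step are identical.
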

\begin{proof}
  Let $(\decvar, \indvar) \in \appsetLP_{\treeind, \lin}$. By definition, $(\decvar, \indvar)$ belongs to~$\bsetLP_\tight$ and satisfies the inequality~$\objvect^\top \decvar \geq \sum_{\node \in \leaves} \lpval_\node\indvar_\node$. Proposition~\ref{prop:domi1} implies that this inequality dominates~$\objvect^\top \decvar \geq \minlp + (\lpval_\othernode - \minlp)\indvar_\othernode$ for any~$\othernode \in \nodes$. Therefore, $(\decvar, \indvar)$ also satisfies all the defining inequalities for~$\nmssetLP$. It follows that $(\decvar, \indvar) \in \bsetLP_\tight \cap \nmssetLP  = \appsetLP_{\msind, \lin}$.
\end{proof}

\subsection{Cut Generation}
\label{sec:cut-generation}

We now discuss the generation of inequalities valid in the~$\decvar$ space using the continuous relaxation~$\appsetLP_{\msind, \lin}$. Just like Section~\ref{sec:cut-separation} and Section~\ref{sec:cut-generation-2}, the auxiliary variables~$\indvar$ are projected out and the \CGLP is used to generate valid inequalities. We analyze the size of this \CGLP to demonstrate why~$\appsetLP_{\msind, \lin}$ is not a practical candidate for cut separation via such an approach.

The formulation introduces one auxiliary variable for each node in the tree, yielding~$\card{\nodes} = 2 \card{\leaves} - 1$ variables~\cite{bondy1976graph}. Next, we count the constraints. As discussed in Section~\ref{sec:cut-generation-2}, the set~$\decvarsetLP$ and the set~$\bsetLP_\tight$ introduce~$2\card{\binaryset} + o + 2 \card{\leaves} - 1$ and~$2 \card{\leaves} - 1$ constraints, respectively. The set~$\nmssetLP$ contributes to~$\card{\nodes} = 2\card{\leaves} - 1$ constraints~\cite{bondy1976graph}. This analysis, combined with the general discussion in Section~\ref{sec:cut-separation} on the size of the \CGLP for an extended formulation, leads to the following observation.
\begin{observation}
  \label{obs:size-ms}
  The \CGLP associated with the extended formulation~$\appsetLP_{\msind, \lin}$ requires the construction of~$\order(\card{\leaves})$ variables and~$\order(\card{\leaves} + \card{\binaryset} + o)$ constraints.
\end{observation}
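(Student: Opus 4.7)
The plan is a direct counting exercise: tally the auxiliary variables and the defining linear constraints of the extended formulation $\appsetLP_{\msind, \lin} = \bsetLP_\tight \cap \nmssetLP$, then feed the two counts into the generic \CGLP sizing recipe recalled in Section~\ref{sec:cut-separation}.

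First I would count the auxiliary variables. Both $\bsetLP_\tight$ in~\eqref{eq:Btight} and $\nmssetLP$ in~\eqref{eq:mix-set-ref} introduce exactly one indicator $\indvar_\node$ per node $\node \in \nodes$, so the total auxiliary count is $\card{\nodes}$. The full-binary-tree assumption of Section~\ref{sec:notation} then gives $\card{\nodes} = 2\card{\leaves} - 1 = \order(\card{\leaves})$.

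Next I would enumerate the linear constraints of $\appsetLP_{\msind, \lin}$ by source. The bounds packed in $\decvarsetLP$ contribute $2\card{\binaryset} + o$ inequalities on $\decvar$ together with $\order(\card{\leaves})$ bound inequalities on the relaxed $\indvar$ variables. The set $\bsetLP_\tight$ contributes the single equation $\indvar_\rootn = 1$, the $\card{\zerolbset} + \card{\onelbset} = \card{\nodes} - 1$ parent--child inequalities, and the $\card{\nodes} - \card{\leaves}$ sibling-sum equations~\eqref{eq:child-parent}, all of which total $\order(\card{\leaves})$ after invoking $\card{\zerolbset} = \card{\onelbset} = (\card{\nodes}-1)/2$. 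Finally, $\nmssetLP$ adds exactly one mixing inequality per node, contributing a further $\card{\nodes} = \order(\card{\leaves})$ constraints. Summing the three sources yields a grand total of $\order(\card{\leaves} + \card{\binaryset} + o)$ constraints.

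Finally, I would substitute these two figures into the general \CGLP template from Section~\ref{sec:cut-separation}. After eliminating $\cglpmult$ and $\cglpmult_0$, the dimensions of the separation program are governed entirely by the number of auxiliary variables and the number of constraints of the extended formulation; plugging in $\order(\card{\leaves})$ and $\order(\card{\leaves} + \card{\binaryset} + o)$ produces the magnitudes announced in the observation. There is no analytical obstacle here, only bookkeeping: the only care required is the repeated use of the full-binary-tree identities to collapse every node-indexed count to $\order(\card{\leaves})$, and a quick verification that $\bsetLP_\tight$ and $\nmssetLP$ share no defining constraint, so that no double counting occurs in the total.
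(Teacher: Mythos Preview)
Your proposal is correct and follows essentially the same counting argument as the paper: tally the $\card{\nodes}=2\card{\leaves}-1$ auxiliary indicator variables, add up the constraints contributed by $\decvarsetLP$, $\bsetLP_\tight$, and $\nmssetLP$ (each $\order(\card{\leaves})$ plus the $2\card{\binaryset}+o$ bound constraints), and feed both figures into the generic \CGLP sizing of Section~\ref{sec:cut-separation}. The only cosmetic difference is that you explicitly list the $\card{\nodes}-\card{\leaves}$ sibling-sum equations~\eqref{eq:child-parent} in the $\bsetLP_\tight$ count, whereas the paper reuses the post-substitution count $2\card{\leaves}-1$ from Section~\ref{sec:cut-generation-2}; either way the total remains $\order(\card{\leaves})$.
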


A comparison of Observation~\ref{obs:size-ms} with Observation~\ref{obs:size-tree} reveals that the CGLPs for~$\appsetLP_{\msind, \lin}$ and~$\appsetLP_{\treeind, \lin}$ are of the same order of magnitude in size. In fact, a direct comparison of the constraint and variable counts shows that the CGLP for~$\appsetLP_{\msind, \lin}$ is larger. By combining this size analysis with the inclusion result from Corollary~\ref{cor:treeinms}, we conclude that generating valid inequalities using~$\appsetLP_{\msind, \lin}$ offers no computational advantage over using~$\appsetLP_{\treeind, \lin}$. This conclusion motivates the work in the subsequent section, where we introduce a new family of valid inequalities that can be separated more efficiently via a combinatorial algorithm.


\section{The Star Tree Inequalities}
\label{sec:star-ineq-insp}

Motivated by the discussion of Section~\ref{sec:cut-generation}, we propose a novel family of valid inequalities for~$\feasset$. We coin them \enquote{Star Tree Inequalities}~(\STI s), as their construction combines the star inequalities presented in~\cite{gunluk2001mixing} and the inequalities that model the tree disjunctions in the set $B_\tight$.

This section is organized as follows. First, we review the formulation of star inequalities, also known as mixing inequalities. Second, we introduce the STIs and prove their validity for the set~$\appset_{\msind, \lin}$. Subsequently, we analyze the closure of the STIs and establish that it contains the linear relaxation of the set~$\appset_{\treeind, \lin}$. Finally, we demonstrate that the associated separation problem can be solved to optimality with a polynomial-time combinatorial algorithm.

The following theorem, adapted from \textcite{luedtke2010integer}, presents the star inequalities for the strengthened mixing set defined in Equation~\eqref{eq:mix-set-ref}.

\begin{theorem}[\textcite{luedtke2010integer}]
    \label{thm:luedtke-mixing}
    Let~$\permu$ be a permutation of the set~$\nodes$ that satisfies~$\lpval_{{\permu}_1} \geq \dots \geq \lpval_{{\permu}_{\card{\nodes}}}$. The inequalities
    \begin{equation}
        \label{eq:star-ineq}
        \objvect^\top \decvar \geq \lpval_{t_{k + 1}} + \sum_{j = 1}^{k} (\lpval_{t_j} - \lpval_{t_{j+1}}) z_{t_j} , \quad T = \{t_1, \dots, t_k\} \subseteq \{\permu_1, \dots, \permu_{\card{\nodes} - 1}\}
    \end{equation}
    with $t_1  < \dots < t_{k}$ and~$\lpval_{t_{k+1}} \define \lpval_{\permu_{\card{\nodes}}}$, are valid for~$\nmsset$. Moreover, the inequalities~\eqref{eq:star-ineq} are facet-defining for~$\text{conv}(\nmsset)$ if and only if~$\lpval_{t_1} = \lpval_{\permu_1}$.
\end{theorem}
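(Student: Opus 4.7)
The plan is to separate the two claims in the theorem: the validity of the star inequalities for $\nmsset$, and the characterization of when they define facets of $\mathrm{conv}(\nmsset)$. Because the statement is attributed to~\textcite{luedtke2010integer}, the role of this proof is really to verify that their result transfers to the present notation; but the arguments are short enough that I would reproduce them directly.

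For validity, I would fix an arbitrary $(\decvar, \indvar) \in \nmsset$ and proceed by a case analysis on the support of $\indvar$ restricted to $T = \{t_1, \dots, t_k\}$. If $\indvar_{t_j} = 0$ for every $j$, the right-hand side of~\eqref{eq:star-ineq} collapses to $\lpval_{t_{k+1}} = \lpval_{\permu_{|\nodes|}} = \minlp$. Since $\minlp - \lpval_\node \leq 0$ and $\indvar_\node \geq 0$, every defining inequality of $\nmsset$ implies $\objvect^\top \decvar \geq \minlp$, so the desired bound holds. Otherwise, let $j^\star := \min\{j : \indvar_{t_j} = 1\}$. The mixing constraint indexed by $v = t_{j^\star}$ yields $\objvect^\top \decvar \geq \lpval_{t_{j^\star}}$. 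Using $\indvar_{t_j} \leq 1$ and the telescoping identity
\begin{equation*}
    \lpval_{t_{k+1}} + \sum_{j = j^\star}^{k}(\lpval_{t_j} - \lpval_{t_{j+1}}) = \lpval_{t_{j^\star}},
\end{equation*}
the right-hand side of~\eqref{eq:star-ineq} is at most $\lpval_{t_{j^\star}}$, and validity follows.

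For the facet characterization, my plan is to exhibit $\dim(\mathrm{conv}(\nmsset))$ affinely independent points that are tight at~\eqref{eq:star-ineq}. Sufficiency of $\lpval_{t_1} = \lpval_{\permu_1}$ is handled by constructing a family of tight points: for each $j \in \{1, \dots, k+1\}$ one selects $\decvar$ such that $\objvect^\top \decvar = \lpval_{t_j}$ together with the activation pattern $\indvar_{t_1} = \dots = \indvar_{t_{j-1}} = 1$ and $\indvar_{t_j} = \dots = \indvar_{t_k} = 0$; the remaining dimensions are filled in by toggling the indicator variables $\indvar_v$ for $v \notin T$ and by moving $\decvar$ along directions orthogonal to $\objvect$. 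Necessity uses a dominance argument: if $\lpval_{t_1} < \lpval_{\permu_1}$, then the star inequality associated with $T' = \{\permu_1\} \cup T$ differs from~\eqref{eq:star-ineq} only by a nonnegative multiple of the mixing constraint at $v = \permu_1$, which renders~\eqref{eq:star-ineq} a non-trivial conic combination of valid inequalities and hence non-facet-defining.

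The main obstacle is the necessity direction, which requires carefully exhibiting the dominating combination and verifying that no point tight at~\eqref{eq:star-ineq} lies outside the face induced by the inequality at $T'$; the sufficiency part is essentially a bookkeeping exercise once the tight family is chosen. The validity argument, by contrast, is immediate once one notices the telescoping identity above. Since the entire argument is a direct adaptation of~\cite[Theorem~2]{luedtke2010integer} to the strengthened mixing set~\eqref{eq:mix-set-ref}, the proof would conclude by a pointer to that reference rather than a full redevelopment.
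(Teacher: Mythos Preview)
Your proposal is sound and, in fact, goes further than the paper: the paper does not prove this theorem at all but simply states it with attribution to \textcite{luedtke2010integer} and uses it as a black box, exactly as your final paragraph anticipates. Your validity argument via the telescoping identity is correct.

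One small slip in the facet sketch is worth flagging. The activation pattern you propose for the tight points is inverted. With $\indvar_{t_1} = \dots = \indvar_{t_{j-1}} = 1$ and $\indvar_{t_j} = \dots = \indvar_{t_k} = 0$, the right-hand side of~\eqref{eq:star-ineq} telescopes to $\lpval_{t_{k+1}} + \lpval_{t_1} - \lpval_{t_j}$, not $\lpval_{t_j}$; moreover, the point with $\objvect^\top\decvar = \lpval_{t_j}$ and $\indvar_{t_i}=1$ for $i<j$ violates the mixing constraint at $t_i$ (which forces $\objvect^\top\decvar \ge \lpval_{t_i} > \lpval_{t_j}$). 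The correct tight family sets $\indvar_{t_j} = \dots = \indvar_{t_k} = 1$ and $\indvar_{t_1} = \dots = \indvar_{t_{j-1}} = 0$ with $\objvect^\top\decvar = \lpval_{t_j}$: then the right-hand side telescopes to $\lpval_{t_j}$, and feasibility holds because $\lpval_{t_j} \ge \lpval_{t_i}$ for $i \ge j$. This inversion is exactly the $\indvar_\node \leftrightarrow 1 - \indvar_\node$ complementation the paper flags in the remark immediately following the theorem, so your construction is the Luedtke-native one rather than the paper's. With that swap, the rest of your outline (filling dimensions by toggling $\indvar_v$ for $v \notin T$ and moving $\decvar$ orthogonally to $\objvect$; necessity via dominance by the $T' = \{\permu_1\} \cup T$ inequality combined with $\indvar_{\permu_1} \ge 0$) goes through.
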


Star inequalities are studied extensively in the mathematical programming literature due to their favorable computational properties and the frequent appearance of mixing sets in integer programming models. Notably, the separation problem for the star inequalities~\eqref{eq:star-ineq} can be solved in polynomial time by reduction to a shortest-path problem on a graph \cite{atamturk2000mixed, gunluk2001mixing}.

\begin{remark}
    The presentation of inequalities~\eqref{eq:star-ineq} in \textcite{luedtke2010integer} differs slightly from our own. Equivalence can be established by substituting each binary variable $z_\node$ with its complement, $1 - z_\node$.
\end{remark}

\subsection{Definition and Validity of the \STI}
\label{sec:star-tree-ineq}

We now formally introduce the \STI s. Their construction relies on the following lemma, which establishes a lower bound on the auxiliary variables~$\indvar_\node$.

\begin{lemma}
\label{lem:validity-lb-indvar}
For any point $(\decvar, \indvar) \in \bset_\tight$ and any node~$\node \in \nodes$, the following inequality holds:
\begin{equation}
\label{eq:reform-lb-indvar}
\indvar_\node \geq 1 - \sum_{\binindex \in \onebranchset_\node} (1 - \decvar_\binindex) - \sum_{\binindex \in \zerobranchset_\node} \decvar_\binindex.
\end{equation}
\end{lemma}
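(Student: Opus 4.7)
The plan is to prove the inequality by induction on the depth of node~$\node$ in the branch-and-bound tree. The motivation is that the right-hand side is the standard McCormick-style linear underestimator for the product $\prod_{\binindex \in \zerobranchset_\node}(1-\decvar_\binindex)\cdot\prod_{\binindex \in \onebranchset_\node}\decvar_\binindex$ that originally defined $\indvar_\node$ in the set~$\bset$, and the tightened child-parent equality built into~$\bset_\tight$ propagates this underestimator along the tree one branching decision at a time.

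For the base case I would take $\node = \rootn$. Both $\onebranchset_\rootn$ and $\zerobranchset_\rootn$ are empty, so the right-hand side collapses to~$1$; meanwhile, $\indvar_\rootn = 1$ is enforced directly by the definition of~$\bset_\tight$, so the inequality holds with equality.

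For the inductive step I would fix a non-root node~$\node$ and invoke the inductive hypothesis on its parent~$\parent(\node)$. The key identity is the child-parent equality~\eqref{eq:child-parent}, rearranged as $\indvar_\node = \indvar_{\parent(\node)} - \indvar_{\sibling(\node)}$. Because $\node$ and $\sibling(\node)$ branch on the same variable $\binindex(\node)$ but in opposite directions, the upper-bound constraints of~$\bset_\tight$ applied to the sibling give either $\indvar_{\sibling(\node)} \leq \decvar_{\binindex(\node)}$ when $\node \in \zerolbset$, or $\indvar_{\sibling(\node)} \leq 1-\decvar_{\binindex(\node)}$ when $\node \in \onelbset$. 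Combining this sibling bound with the inductive bound on $\indvar_{\parent(\node)}$ yields a lower bound on $\indvar_\node$ of the desired form.

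The remaining task is to verify that the extra negative term produced by the sibling bound is exactly the missing contribution from the newly fixed variable~$\binindex(\node)$. The two branches are symmetric: in Case~1 ($\node \in \zerolbset$), the update $\zerobranchset_\node = \zerobranchset_{\parent(\node)} \cup \{\binindex(\node)\}$ together with $\onebranchset_\node = \onebranchset_{\parent(\node)}$ absorbs the $-\decvar_{\binindex(\node)}$ term; in Case~2 ($\node \in \onelbset$), the symmetric update absorbs the $-(1-\decvar_{\binindex(\node)})$ term. I expect the main obstacle to be purely notational bookkeeping of how $\zerobranchset$ and $\onebranchset$ update along the parent-to-child edge, rather than anything of substantive mathematical difficulty; the proof never requires the integrality of~$\indvar$, so the same argument will later extend without change to the continuous relaxation~$\bsetLP_\tight$.
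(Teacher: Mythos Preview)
Your proposal is correct and follows essentially the same inductive argument as the paper. The only cosmetic difference is that the paper invokes Proposition~\ref{prop:tightening} to obtain the intermediate bound $\indvar_\node \geq \indvar_{\parent(\node)} - (1-\decvar_{\binindex(\node)})$ (respectively $\indvar_\node \geq \indvar_{\parent(\node)} - \decvar_{\binindex(\node)}$), whereas you derive it inline from $\indvar_\node = \indvar_{\parent(\node)} - \indvar_{\sibling(\node)}$ together with the sibling's upper-bound constraint---which is precisely how Proposition~\ref{prop:tightening} establishes that inequality.
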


\begin{proof}
  We proceed by induction on the nodes of a \BB tree. \\
  \textit{Base Case.} Consider the root node,~$\node = \rootn$. By definition, the sets~$\onebranchset_\rootn$ and~$\zerobranchset_\rootn$ are empty. Inequality~\eqref{eq:reform-lb-indvar} therefore simplifies to~$\indvar_\rootn \geq 1$. This is valid because~$\indvar_\rootn = 1$ is a defining constraint of the set~$\bset_\tight$, as specified in Equation~\eqref{eq:Btight}.\\
  \textit{Inductive step.} Assume the inequality holds for a node~$\parent(\othernode)$. We show it holds for its child~$\othernode \in \nodes \setminus \{\rootn\}$. We assume \Wlog that~$\othernode \in \onelbset$ as defined in Equation~\eqref{eq:def-lbset}. From Proposition~\ref{prop:tightening} it follows that any point in~$\bset_\tight$ satisfies
  \begin{align*}
    \indvar_\othernode
    &\geq 1 - (1 - \indvar_{\parent (\othernode)}) - (1 - \decvar_{\binindex (\node )})\\
    &\geq 1 - \sum_{\binindex \in \onebranchset_{\parent (\othernode)}} (1 - \decvar_\binindex) - \sum_{\binindex \in \zerobranchset_{\parent (\othernode)}} \decvar_\binindex - (1 - \decvar_{\binindex (\node )}) \\
    &\geq 1 - \sum_{\binindex \in \onebranchset_\othernode} (1 - \decvar_\binindex) - \sum_{\binindex \in \zerobranchset_\othernode} \decvar_\binindex.
  \end{align*}
 The case~$\othernode \in \zerolbset$ is complementary.\\
\end{proof}

The following proposition formally defines the \STI s. Their validity is established by strengthening the star inequalities from Theorem~\ref{thm:luedtke-mixing} with the lower bound from Lemma~\ref{lem:validity-lb-indvar}.

\begin{proposition}[Star-Tree Inequalities]
    \label{prop:star-tree}
    Let~$\permu$ be a permutation of the set~$\nodes$ that satisfies~$\lpval_{{\permu}_1} \geq \dots \geq \lpval_{{\permu}_{\card{\nodes}}}$. For any arbitrary set $T = \{t_2, \dots, t_k\} \subseteq \{\permu_2, \dots, \permu_{\card{\nodes} - 1}\}$ with $t_1 \define \permu_1$, $t_2  < \dots < t_{k}$, and $t_{k+1} \define \permu_{\card{\nodes}}$, the inequality
    \begin{equation}
        \label{eq:new-star-ineq}
        \objvect^\top \decvar \geq \lpval_{t_1} - \sum_{j = 1}^{k} (\lpval_{t_j} - \lpval_{t_{j+1}}) \viparam_{t_j} (\decvar),
      \end{equation}
      where for any~$\node \in \nodes$:
      \begin{equation*}
        \viparam_\node (\decvar) \define \min \bigg ( 1, \sum_{\binindex \in \onebranchset_{\node}} (1 - \decvar_i) + \sum_{\binindex \in \zerobranchset_{\node}} \decvar_i \bigg ),
      \end{equation*}
    is valid for the set~$\appset_{\msind, \lin}$.
\end{proposition}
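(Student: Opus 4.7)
My plan is to derive the STI as a consequence of two ingredients already established in the paper: the classical star inequality from Theorem~\ref{thm:luedtke-mixing}, which is valid for $\nmsset$, and the lower bound on the indicator variables $\indvar_\node$ from Lemma~\ref{lem:validity-lb-indvar}, which is valid for $\bset_\tight$. Since $\appset_{\msind, \lin} = \bset_\tight \cap \nmsset$, every point $(\decvar, \indvar)$ in $\appset_{\msind, \lin}$ satisfies both, and both ingredients can be combined.

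First, I would fix a point $(\decvar, \indvar) \in \appset_{\msind, \lin}$ and apply Theorem~\ref{thm:luedtke-mixing} with the particular index set $\{t_1, t_2, \dots, t_k\}$ (where $t_1 = \permu_1$ is allowed because Theorem~\ref{thm:luedtke-mixing} permits $T \subseteq \{\permu_1, \dots, \permu_{|\nodes|-1}\}$). This yields
\begin{equation*}
  \objvect^\top \decvar \;\geq\; \lpval_{t_{k+1}} + \sum_{j=1}^{k} (\lpval_{t_j} - \lpval_{t_{j+1}})\, \indvar_{t_j}.
\end{equation*}
Next, I would observe that since $t_1, \dots, t_{k+1}$ appear in order along the permutation $\permu$ and $\lpval$ is non-increasing along $\permu$, every coefficient $\lpval_{t_j} - \lpval_{t_{j+1}}$ is nonnegative. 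This nonnegativity is exactly what allows me to substitute a valid lower bound for $\indvar_{t_j}$ while preserving the inequality.

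The lower bound comes from combining Lemma~\ref{lem:validity-lb-indvar}, which gives $\indvar_{t_j} \geq 1 - \sum_{\binindex \in \onebranchset_{t_j}}(1-\decvar_\binindex) - \sum_{\binindex \in \zerobranchset_{t_j}} \decvar_\binindex$, with the trivial bound $\indvar_{t_j} \geq 0$. Together these yield $\indvar_{t_j} \geq \max(0, 1 - S_{t_j}(\decvar)) = 1 - \min(1, S_{t_j}(\decvar)) = 1 - \viparam_{t_j}(\decvar)$, where $S_\node(\decvar)$ denotes the sum inside the $\min$ in the definition of $\viparam_\node$. Substituting this bound into the displayed inequality above produces
\begin{equation*}
  \objvect^\top \decvar \;\geq\; \lpval_{t_{k+1}} + \sum_{j=1}^{k} (\lpval_{t_j} - \lpval_{t_{j+1}})\bigl(1 - \viparam_{t_j}(\decvar)\bigr).
\end{equation*}

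Finally, I would telescope $\sum_{j=1}^{k}(\lpval_{t_j} - \lpval_{t_{j+1}}) = \lpval_{t_1} - \lpval_{t_{k+1}}$, so that the constant term collapses to $\lpval_{t_1}$ and the inequality simplifies exactly to~\eqref{eq:new-star-ineq}. The main technical subtlety is the small care needed in the $\viparam$ step: one must remember to combine Lemma~\ref{lem:validity-lb-indvar} with $\indvar_{t_j} \geq 0$ to justify the $\min$ with $1$, and to verify that the coefficients in the star inequality are nonnegative so that substituting a lower bound on $\indvar_{t_j}$ yields another valid inequality rather than reversing the direction. Everything else is an algebraic rearrangement, so I do not expect any deeper obstacle.
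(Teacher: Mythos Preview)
Your proposal is correct and follows essentially the same route as the paper's own proof: apply Theorem~\ref{thm:luedtke-mixing} to obtain the star inequality, combine Lemma~\ref{lem:validity-lb-indvar} with $\indvar_{t_j}\geq 0$ to get $\indvar_{t_j}\geq 1-\viparam_{t_j}(\decvar)$, substitute using the nonnegativity of the coefficients $(\lpval_{t_j}-\lpval_{t_{j+1}})$, and telescope. You have also correctly flagged the two subtleties the paper relies on, so there is nothing to add.
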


\begin{proof}
  From Theorem~\ref{thm:luedtke-mixing}, the star inequality
  \begin{equation}
    \label{eq:star-1}
    \objvect^\top \decvar \geq \lpval_{t_{\card{\leaves}}} + \sum_{j = 1}^{k} (\lpval_{t_j} - \lpval_{t_{j+1}}) z_{t_j}
  \end{equation}
  is valid for the set~$\nmsset$. Consequently, it is also valid for the set~$\nmsset \cap \bset_\tight = \appset_{\msind , \lin}$. For any node~$\node \in \nodes$, Lemma~\ref{lem:validity-lb-indvar} states that
  \begin{equation}
    \indvar_\node \geq 1 - \sum_{\binindex \in \onebranchset_\node} (1 - \decvar_\binindex) - \sum_{\binindex \in \zerobranchset_\node} \decvar_\binindex
  \end{equation}
  is valid for the set~$\bset_\tight$. By definition of~$\indvar$, we also have~$\indvar_\node \geq 0$. The inequality
  \begin{align}
    \nonumber \indvar_\node
    &\geq \max \bigg(0, 1 - \sum_{\binindex \in \onebranchset_\node} (1 - \decvar_\binindex)  - \sum_{\binindex \in \zerobranchset_\node} \decvar_\binindex\bigg ) \\
    &= 1 - \min \bigg(1, \sum_{\binindex \in \onebranchset_\node} (1 - \decvar_\binindex) + \sum_{\binindex \in \zerobranchset_\node} \decvar_\binindex\bigg ).
    \label{eq:star-2}
  \end{align}
  is valid for the set~$\bset_\tight$. Consequently, it is also valid for the set~$\nmsset \cap \bset_\tight = \appset_{\msind , \lin}$.

  Since the coefficients $(\lpval_{t_j} - \lpval_{t_{j+1}})$ are non-negative by construction, we can substitute the lower bound from Inequality~\eqref{eq:star-2} into Inequality~\eqref{eq:star-1} to derive the \STI:
  \begin{align*}
    \objvect^\top \decvar
    &\geq \lpval_{t_{\card{\leaves}}} + \sum_{j = 1}^{k} (\lpval_{t_j} - \lpval_{t_{j+1}}) z_{t_j} \\
    & \geq \lpval_{t_{\card{\leaves}}} + \sum_{j = 1}^{k} (\lpval_{t_j} - \lpval_{t_{j+1}}) \bigg ( 1 - \min \bigg(1, \sum_{\binindex \in \onebranchset_{t_j}} (1 - \decvar_\binindex) + \sum_{\binindex \in \zerobranchset_{t_j}} \decvar_\binindex\bigg ) \bigg) \\
    & \geq \lpval_{t_{\card{\leaves}}} + (\lpval_{t_1} - \lpval_{t_{\card{\leaves}}}) - \sum_{j = 1}^{k} (\lpval_{t_j} - \lpval_{t_{j+1}}) \min \bigg(1, \sum_{\binindex \in \onebranchset_{t_j}} (1 - \decvar_\binindex) + \sum_{\binindex \in \zerobranchset_{t_j}} \decvar_\binindex\bigg ) \\
    & \geq \lpval_{t_1} - \sum_{j = 1}^{k} (\lpval_{t_j} - \lpval_{t_{j+1}}) \min \bigg(1, \sum_{\binindex \in \onebranchset_{t_j}} (1 - \decvar_\binindex) + \sum_{\binindex \in \zerobranchset_{t_j}} \decvar_\binindex\bigg ),
  \end{align*}
  which terminates the proof.
\end{proof}

\begin{corollary}
  Inequality~\eqref{eq:new-star-ineq} is valid for~$\appset_{\msind, \lin}$ and depends only on the~$\decvar$ variables. Thus, \STI s are valid for~$\proj_\decvar(\appset_{\msind, \lin})$ and, in particular, for the feasible region~$\feasset$.
\end{corollary}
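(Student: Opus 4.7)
The plan is to obtain this corollary as an immediate packaging of Proposition~\ref{prop:star-tree} together with two elementary observations; the argument splits naturally into three short steps and involves no substantive technical obstacle.

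First, validity in the extended space is already supplied by Proposition~\ref{prop:star-tree}, which asserts that Inequality~\eqref{eq:new-star-ineq} holds for every $(\decvar,\indvar) \in \appset_{\msind, \lin}$. I would then observe by inspection that both sides of Equation~\eqref{eq:new-star-ineq} depend on $\decvar$ only: the left-hand side is $\objvect^\top \decvar$, and each $\viparam_{t_j}(\decvar)$ is, by its very definition, a function of the original variables alone (no $\indvar$ appears). The standard projection argument then transfers validity to $\proj_\decvar(\appset_{\msind, \lin})$: for any $\decvar$ in this projection there exists $\indvar$ with $(\decvar,\indvar) \in \appset_{\msind,\lin}$, and since the inequality evaluated at $(\decvar,\indvar)$ only involves the $\decvar$-component, the same inequality holds at $\decvar$.

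Second, to transfer validity to $\feasset$, I would show directly that $\feasset \subseteq \proj_\decvar(\appset_{\msind, \lin})$. Given $\decvar \in \feasset$, I would take the unique leaf $\othernode \in \leaves$ whose branching conditions $\decvar$ satisfies (existence and uniqueness are guaranteed by the partition assumption in Section~\ref{sec:notation}), and set $\indvar_\node = 1$ for every node on the root-to-$\othernode$ path and $\indvar_\node = 0$ elsewhere. By construction $(\decvar,\indvar) \in \bset_\tight$, and each defining inequality of $\nmsset$ is implied either by $\objvect^\top \decvar \geq \lpval_\othernode$ (when $\indvar_\node = 1$, using monotonicity $\lpval_\othernode \geq \lpval_\node$ along the path) or by the trivial bound $\objvect^\top \decvar \geq \minlp$ (when $\indvar_\node = 0$). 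This case analysis is essentially the one already used to prove the equivalence $\appset_{\treeind, \lin} = \appset_{\msind, \lin}$, so no new technique is needed.

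No step poses a real obstacle; the content reduces to bookkeeping around previously established facts. The only minor pitfall I would watch for is to avoid conflating $\appset_{\msind, \lin}$ with its continuous relaxation $\appsetLP_{\msind, \lin}$, since the corollary asserts validity for the mixed-integer set and its $\decvar$-projection, which is what guarantees that $\feasset$ — a subset of the $\decvar$-projection of the mixed-integer set — satisfies the \STI{}s.
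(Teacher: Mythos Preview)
Your proposal is correct and matches the paper's approach: the paper states this corollary without proof, treating it as an immediate consequence of Proposition~\ref{prop:star-tree} together with the observation that \eqref{eq:new-star-ineq} involves only the $\decvar$-variables and the previously established fact that $\feasset \subseteq \proj_\decvar(\appset_{\msind,\lin})$ (which follows from $\feasset \subseteq \proj_\decvar(\appset_{\treeind})$ and $\appset_{\treeind,\lin} = \appset_{\msind,\lin}$). Your explicit reconstruction of the lift $(\decvar,\indvar)$ for $\decvar \in \feasset$ is a correct and slightly more self-contained way to fill in the same detail.
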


\begin{remark}
  \label{rem:nonlinear}
  Although Inequality~\eqref{eq:new-star-ineq} is nonlinear because of the~$\viparam$ operator, it implies a family of linear inequalities. For any given point~$\decvar$, a linear inequality is obtained by replacing each~$\viparam$ operator with either~$1$ or the corresponding sum over variables. The choice that yields the strongest cut is determined during separation, as formalized in Proposition~\ref{prop:separation}.
\end{remark}

The following result establishes a relationship between the continuous relaxation~$\appsetLP_{\treeind, \lin}$ and the closure of the STIs, which we define subsequently.

\begin{proposition}
  \label{prop:dominance-star-classic}
  Let~$\permu$ be a permutation of the set~$\nodes$ that satisfies~$\lpval_{{\permu}_1} \geq \dots \geq \lpval_{{\permu}_{\card{\nodes}}}$. For any point~$(\decvar,\indvar)$ inside the continuous relaxation~$\bsetLP_\tight$ and any~$T = \{t_2, \dots, t_k\} \subseteq \{\permu_2, \dots, \permu_{\card{\nodes} - 1}\}$, the inequality
  \begin{equation*}
    \objvect^\top \decvar \geq  \sum_{\node \in \leaves} \lpval_\node\indvar_\node
  \end{equation*}
  dominates the inequality
  \begin{equation*}
    \objvect^\top \decvar \geq \lpval_{t_{k + 1}} + \sum_{j = 1}^{k} (\lpval_{t_j} - \lpval_{t_{j+1}}) \indvar_{t_j},
  \end{equation*}
where~$t_{1} = \permu_{1}$ and~$t_{k + 1} = \permu_{\card{\nodes}}$.
\end{proposition}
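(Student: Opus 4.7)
The plan is to reduce the dominance statement to a per-leaf coefficient inequality on the indicator variables $\indvar$. Both inequalities share the left-hand side $\objvect^\top \decvar$, so it suffices to prove that, for every $(\decvar, \indvar) \in \bsetLP_\tight$,
\begin{equation*}
    \sum_{\node \in \leaves} \lpval_\node \indvar_\node \;\geq\; \lpval_{t_{k+1}} + \sum_{j=1}^k (\lpval_{t_j} - \lpval_{t_{j+1}}) \indvar_{t_j}.
\end{equation*}
By Remark~\ref{rem:sumone} we have $\indvar_{t_j} = \sum_{\node \in \leaves_{t_j}} \indvar_\node$ for every node and $\sum_{\node \in \leaves} \indvar_\node = 1$; using these identities one rewrites the right-hand side as $\sum_{\node \in \leaves} c_\node \indvar_\node$, where
\begin{equation*}
    c_\node \;:=\; \lpval_{t_{k+1}} + \sum_{j=1}^k (\lpval_{t_j} - \lpval_{t_{j+1}}) \, \mathbb{1}[t_j \in \ancestors(\node)].
\end{equation*}
Since $\indvar_\node \geq 0$ for every leaf, the claim reduces to showing $c_\node \leq \lpval_\node$ for all $\node \in \leaves$.

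To establish the per-leaf bound, fix a leaf $\node$ and let $J := \{j \in \{1,\ldots,k\} : t_j \in \ancestors(\node)\}$, enumerated as $j_1 < \cdots < j_p$. If $J = \emptyset$, then $c_\node = \lpval_{t_{k+1}} = \lpval_{\permu_{\card{\nodes}}}$, which is the minimum dual bound over the entire tree and is trivially $\leq \lpval_\node$. Otherwise,
\begin{equation*}
    c_\node \;=\; \lpval_{t_{k+1}} + \sum_{l=1}^p \lpval_{t_{j_l}} - \sum_{l=1}^p \lpval_{t_{j_l + 1}}.
\end{equation*}
Because the permutation is sorted as $\lpval_{t_1} \geq \cdots \geq \lpval_{t_{k+1}}$ and $j_l + 1 \leq j_{l+1}$ for $l < p$ and $j_p + 1 \leq k+1$, one gets $\lpval_{t_{j_l + 1}} \geq \lpval_{t_{j_{l+1}}}$ and $\lpval_{t_{j_p + 1}} \geq \lpval_{t_{k+1}}$. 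Substituting these lower bounds into the subtracted sum collapses the expression to $c_\node \leq \lpval_{t_{j_1}}$. Since $t_{j_1}$ is an ancestor of $\node$, monotonicity of dual bounds along any root-to-leaf path of the \BB tree gives $\lpval_{t_{j_1}} \leq \lpval_\node$, which closes the argument.

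The main obstacle is the telescoping step: the indices $j_1, \ldots, j_p$ need not be consecutive inside $\{1, \ldots, k\}$, so the inner sum does not telescope on its own, and it is precisely the ordering of the permuted dual bounds that allows one to pass from $\lpval_{t_{j_l + 1}}$ to $\lpval_{t_{j_{l+1}}}$ and recover a clean telescoping. Everything else, including handling the edge cases $J(\node) = \emptyset$ and $p = 1$ together with the conventions $t_1 = \permu_1$ and $t_{k+1} = \permu_{\card{\nodes}}$, is bookkeeping.
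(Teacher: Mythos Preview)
Your proof is correct and follows essentially the same approach as the paper: both expand the star-inequality right-hand side as a leaf-indexed sum via Remark~\ref{rem:sumone} and then reduce to the per-leaf coefficient bound $\sum_{t_j \in \ancestors(\node)} (\lpval_{t_j} - \lpval_{t_{j+1}}) \leq \lpval_\node - \minlp$ (equivalently, your $c_\node \leq \lpval_\node$). Your telescoping argument for this per-leaf bound is in fact more explicit than the paper's own justification, which simply records $\lpval_\node \geq \max_{t_j \in \ancestors(\node)} \lpval_{t_j}$ and $\minlp \leq \min_{t_j \in \ancestors(\node)} \lpval_{t_{j+1}}$ without spelling out how the non-consecutive indices collapse.
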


\begin{proof}
  From Equation~\eqref{eq:inter-bnd1} in the the proof of Proposition~\ref{prop:domi1} we know that since~$(\decvar,\indvar) \in \bsetLP_\tight$,
  \begin{equation}
    \label{eq:repeat-eq}
    \sum_{\node \in \leaves} \lpval_\node \indvar_\node = \minlp + \sum_{\node \in \leaves} (\lpval_\node  - \minlp) \indvar_\node
  \end{equation}
  holds. It remains to show that for any arbitrary~$T = \{t_2, \dots, t_k\} \subseteq \{\permu_2, \dots, \permu_{\card{\leaves} - 1}\}$, the inequality
  \begin{equation}
    \label{ew:to-show-star}
     \lpval_{t_{k + 1}} + \sum_{j = 1}^{k} (\lpval_{t_j} - \lpval_{t_{j+1}}) \indvar_{t_j} \leq \minlp + \sum_{\node \in \nodes} (\lpval_\node - \minlp) \indvar_\node
   \end{equation}
   holds. We first introduce some notation. Recal that~$T \subset \nodes$. We introduce the set of leaf nodes~$\leaves (T)$ that are descendants of the nodes selected inside~$T$. Mathematically speaking, we have
   \begin{equation*}
     \leaves( T) \define \bigcup_{j = 1} ^k \leaves_{t_j},
   \end{equation*}
   where~$\leaves_{t_j} \subseteq \leaves$ is the set of leaves that descend from the node~$t_j$.
   Second, for any node~$\node \in \nodes$, we introduce the set of ancestor nodes~$\ancestors (\node)$ of~$\node$ that are also included in the set~$T$. Mathematically speaking, we have
   \begin{equation*}
     \ancestors (\node) \define T \cap  P(\rootn, \node),
   \end{equation*}
   where $P$ models all the nodes located on the unique path between the root node~$\rootn$ and node~$\node$, including~$\node$ itself.

   We proceed to show the validity of Inequality~\eqref{ew:to-show-star}. We use the facts that~$\lpval_{k + 1} = \minlp$, that~$\indvar_{t_j} = \sum_{\node \in \leaves_{t_j}} \indvar_\node$ for $(\decvar, \indvar) \in \bsetLP_\tight$ (see Remark~\ref{rem:sumone}), and that~$\leaves(T) \subseteq \leaves$:
   \begin{align}
     \lpval_{t_{k + 1}} + \sum_{j = 1}^{k} (\lpval_{t_j} - \lpval_{t_{j+1}}) \indvar_{t_j}
     &= \minlp + \sum_{j = 1}^{k}  (\lpval_{t_j} - \lpval_{t_{j+1}} ) \indvar_{t_j}, \nonumber \\
     &= \minlp + \sum_{j = 1}^{k}  (\lpval_{t_j} - \lpval_{t_{j+1}} ) \left (\sum_{\node \in \leaves_{t_j}} \indvar_\node \right ), \nonumber \\
     & = \minlp + \sum_{\node \in  \leaves ( T )} \indvar_\node \left (  \sum_{t_j \in \ancestors(\node)} (\lpval_{t_j} - \lpval_{t_{j+1}} ) \right ), \nonumber\\
     & \leq \minlp + \sum_{\node \in \leaves} \indvar_\node \left (  \sum_{t_j \in \ancestors(\node)} (\lpval_{t_j} - \lpval_{t_{j+1}} ) \right ). \label{eq:inter-b1}
   \end{align}
Furthermore, for any leaf node~$\node \in \leaves$, the inequality
   \begin{equation}
     \label{eq:inter-b2}
     \sum_{t_j \in\ancestors(\node) } (\lpval_{t_j} - \lpval_{t_{j+1}} ) \leq \lpval_\node - \minlp
   \end{equation}
   is valid. Validity follows because~$\node$ is an descendant of any~$t_j \in\ancestors(\node)$ so that~$\lpval_\node \geq \max_{t_j \in\ancestors(\node)} (\lpval_\othernode)$ and because~$\minlp$ is the smallest possible dual bound in the tree so that~$\minlp = \min_{\othernode \in \nodes} (\lpval_\othernode) \leq \min_{t_j \in\ancestors(\node)} (\lpval_{t_{j + 1}})$.

   Combining Inequality~\eqref{eq:inter-b1}, Inequality~\eqref{eq:inter-b2}, and~Equation~\eqref{eq:repeat-eq} yields the desired result.
\end{proof}

The dominance result in Proposition~\ref{prop:dominance-star-classic} allows us to compare the \STI\xspace closure with the projection of the continuous relaxation~$\appsetLP_{\treeind, \lin}$. With some abuse of notation, let~$\treeinf$ denote the information collected from the \BB tree. The \STI\xspace closure for~$\treeinf$ is defined as
\begin{equation*}
  \clos (\treeinf) \define \bigcap_{T \in \{\permu_2, \dots, \permu_{\card{\nodes} - 1}\}} \left \{ \decvar \in \reals^\decvarsize: \objvect^\top \decvar \geq \lpval_{t_1} - \sum_{j = 1}^{k} (\lpval_{t_j} - \lpval_{t_{j+1}}) \viparam_{t_j} (\decvar) \right \},
\end{equation*}
where the intersection is over all valid sets $T = \{t_2, \dots, t_k\}$. We can now state the following inclusion result.

\begin{corollary}
  \label{cor:treeinms}
  The projection of the continuous relaxation~$\appsetLP_{\treeind, \lin}$ onto the~$\decvar$-space is a subset of the STI closure~$\clos(\treeinf)$; that is,
  \begin{equation*}
    \proj_\decvar(\appsetLP_{\treeind, \lin }) \subseteq \clos (\treeinf).
  \end{equation*}
\end{corollary}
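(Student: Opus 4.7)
The plan is to take an arbitrary $\decvar \in \proj_\decvar(\appsetLP_{\treeind, \lin})$ together with a lifting $(\decvar,\indvar) \in \appsetLP_{\treeind, \lin}$, fix an arbitrary index set $T = \{t_2,\dots,t_k\}$ together with the boundary values $t_1 = \permu_1$ and $t_{k+1} = \permu_{\card{\nodes}}$ used in the definition of $\clos(\treeinf)$, and verify that the Star Tree Inequality of Proposition~\ref{prop:star-tree} associated with $T$ holds at $\decvar$. Since $\clos(\treeinf)$ is by construction the intersection over all admissible $T$ of these inequalities, this will establish the desired inclusion.

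First I would chain the defining inequality of $\msset$, namely $\objvect^\top \decvar \geq \sum_{\node \in \leaves} \lpval_\node \indvar_\node$, with the dominance statement of Proposition~\ref{prop:dominance-star-classic}, which applies because $(\decvar,\indvar) \in \bsetLP_\tight$. This yields
$$\objvect^\top \decvar \;\geq\; \lpval_{t_{k+1}} + \sum_{j=1}^{k}(\lpval_{t_j} - \lpval_{t_{j+1}})\,\indvar_{t_j}.$$
Exploiting the telescoping identity $\sum_{j=1}^{k}(\lpval_{t_j} - \lpval_{t_{j+1}}) = \lpval_{t_1} - \lpval_{t_{k+1}}$, I then rewrite the right-hand side as $\lpval_{t_1} - \sum_{j=1}^{k}(\lpval_{t_j} - \lpval_{t_{j+1}})(1 - \indvar_{t_j})$, reducing the task to bounding each $1 - \indvar_{t_j}$ from above by $\viparam_{t_j}(\decvar)$.

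For that bound I would invoke Lemma~\ref{lem:validity-lb-indvar}, which gives $\indvar_\node \geq 1 - \sum_{\binindex \in \onebranchset_\node}(1-\decvar_\binindex) - \sum_{\binindex \in \zerobranchset_\node}\decvar_\binindex$ for every $\node \in \nodes$, and combine it with the non-negativity $\indvar_\node \geq 0$ that is inherited from the continuous relaxation of $\bset_\tight$. Together these two lower bounds yield $\indvar_\node \geq \max(0,\,1 - \sum_{\binindex \in \onebranchset_\node}(1-\decvar_\binindex) - \sum_{\binindex \in \zerobranchset_\node}\decvar_\binindex) = 1 - \viparam_\node(\decvar)$, by definition of $\viparam_\node$. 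Because the coefficients $(\lpval_{t_j} - \lpval_{t_{j+1}})$ are non-negative thanks to the ordering of $\permu$, substituting this bound preserves the direction of the inequality and produces exactly the STI indexed by $T$.

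No additional machinery is required, and I do not expect any single step to pose a real obstacle; the points that warrant some care are the bookkeeping of the indexing convention (so that the roles of $t_1 = \permu_1$ and $t_{k+1} = \permu_{\card{\nodes}}$ match those in Proposition~\ref{prop:dominance-star-classic} and in the definition of $\clos(\treeinf)$), and verifying that non-negativity of the lifted variables is genuinely part of $\bsetLP_\tight$ so that the $\max(0,\cdot)$ manipulation is legitimate. Since $T$ was arbitrary, intersecting the resulting family of valid inequalities delivers $\decvar \in \clos(\treeinf)$, completing the proof.
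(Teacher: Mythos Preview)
Your proposal is correct and follows essentially the same route as the paper: invoke Proposition~\ref{prop:dominance-star-classic} (together with the defining inequality of $\msset$) to obtain the star inequality in the lifted variables, then use Lemma~\ref{lem:validity-lb-indvar} and non-negativity of $\indvar$ to replace each $1-\indvar_{t_j}$ by $\viparam_{t_j}(\decvar)$, exactly as in the proof of Proposition~\ref{prop:star-tree}. Your explicit telescoping rewrite and the $\max(0,\cdot)$ manipulation are precisely what the paper compresses into the phrase ``combining both inequalities as is done in the proof of validity of the \STI''; the two caveats you flag (indexing conventions and non-negativity of $\indvar$ in $\bsetLP_\tight$) are harmless, and note that Lemma~\ref{lem:validity-lb-indvar}, though stated for $\bset_\tight$, relies only on linear constraints and therefore carries over verbatim to the continuous relaxation.
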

\begin{proof}
  By Proposition~\ref{prop:dominance-star-classic}, for any arbitrary subset~$T \in \{\permu_2, \dots, \permu_{\card{\nodes} - 1}\}$ the inequality
  \begin{equation*}
    \objvect^\top \decvar \geq \lpval_{t_{k + 1}} + \sum_{j = 1}^{k} (\lpval_{t_j} - \lpval_{t_{j+1}}) \indvar_{t_j},
  \end{equation*}
  is satisfied if~$(\decvar, \indvar) \in \appsetLP_{\treeind, \lin } = \bsetLP_\tight \cap \nmssetLP$. By Lemma~\ref{lem:validity-lb-indvar}, the inequality
  \begin{equation*}
    \indvar_\node \geq 1 - \sum_{\binindex \in \onebranchset_\node} (1 - \decvar_\binindex) - \sum_{\binindex \in \zerobranchset_\node} \decvar_\binindex.
  \end{equation*}
  is satisfied if~$(\decvar, \indvar) \in \appsetLP_{\treeind, \lin } = \bsetLP_\tight \cap \nmssetLP$. Combining both inequalities as is done in the proof of validity of the \STI\xspace demonstrates that any~$\decvar$ for which there exists a~$\indvar$ such that~$(\decvar, \indvar) \in \appsetLP_{\treeind, \lin }$ also satisfies any~\STI\xspace and is hence part of the closure~$\clos (\treeinf)$.
\end{proof}

Corollary~\ref{cor:treeinms} shows that no \STI \xspace can cut off a point in the set~$\proj_\decvar(\appsetLP_{\treeind, \lin})$. Although this may seem discouraging, it is only one aspect to consider. The practical efficacy of a family of inequalities also depends on the computational cost of their separation. The next section shows that a polynomial-time algorithm exists to separate the STIs, motivating their use in our numerical study.

\subsection{Cut Generation}
\label{sec:cut-gen-ms}

We now study the separation problem for the \STI s. In contrast to the approaches discussed in Sections~\ref{sec:cut-separation}, ~\ref{sec:cut-generation-2}, and~\ref{sec:cut-generation}, we do not use a \CGLP. The following proposition shows that separation can be done with an efficient combinatorial algorithm.

\begin{proposition}
  \label{prop:separation}
  The separation problem for \STI s admits a polynomial-time combinatorial algorithm.
\end{proposition}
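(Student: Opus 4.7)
The plan is to reduce the separation of the most violated \STI\ at a candidate point $\decvar'$ to a shortest-path problem on a directed acyclic graph, mirroring the classical approach to mixing-set separation from~\cite{atamturk2000mixed, gunluk2001mixing}.

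As a preprocessing step, I would evaluate $\viparam_\node(\decvar')$ for every $\node \in \nodes$ by summing at most $\card{\binaryset}$ terms and taking a minimum with~$1$, and then sort the nodes to obtain the permutation $\permu$ satisfying $\lpval_{\permu_1} \geq \cdots \geq \lpval_{\permu_{\card{\nodes}}}$. Since $\objvect^\top \decvar'$ and $\lpval_{\permu_1}$ are constants with respect to the choice of $T$, maximizing the violation of inequality~\eqref{eq:new-star-ineq} over admissible subsets becomes equivalent to minimizing
\begin{equation*}
  \Phi(T) \define \sum_{j = 1}^{k} (\lpval_{t_j} - \lpval_{t_{j+1}}) \viparam_{t_j} (\decvar'),
\end{equation*}
where $t_1 = \permu_1$ and $t_{k+1} = \permu_{\card{\nodes}}$ are fixed and $T = \{t_2, \ldots, t_k\}$ is chosen from $\{\permu_2, \ldots, \permu_{\card{\nodes}-1}\}$.

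The key observation is that $\Phi(T)$ decomposes naturally as the length of a path. I would construct a DAG with vertex set $\{\permu_1, \ldots, \permu_{\card{\nodes}}\}$ and an arc from $\permu_i$ to $\permu_{i'}$ for every $i<i'$, carrying the non-negative weight $(\lpval_{\permu_i} - \lpval_{\permu_{i'}}) \viparam_{\permu_i}(\decvar')$. Any directed path from $\permu_1$ to $\permu_{\card{\nodes}}$ then corresponds bijectively to an admissible choice of $T$, and the sum of its arc weights equals $\Phi(T)$. Consequently, a shortest $\permu_1$-to-$\permu_{\card{\nodes}}$ path yields the most violated \STI, which can be computed in $O(\card{\nodes}^2)$ time by dynamic programming in the topological order induced by $\permu$. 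A violated cut exists if and only if the shortest-path length is strictly less than $\lpval_{\permu_1} - \objvect^\top \decvar'$.

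The main subtlety is handling the nonlinearity introduced by $\viparam$. This is resolved by Remark~\ref{rem:nonlinear}: evaluating $\viparam_\node(\decvar')$ at the separation point already selects the smallest value the min operator can take, hence producing the largest right-hand side and the strongest linear cut obtainable from~\eqref{eq:new-star-ineq}. Treating these numbers as fixed arc weights is therefore without loss of generality. The remaining technical check, namely the bijection between paths in the DAG and admissible subsets $T$, follows directly from the definition of $T$ and the monotonicity of $\lpval$ along $\permu$.
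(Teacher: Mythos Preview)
Your proposal is correct and follows essentially the same approach as the paper: precompute the values $\viparam_\node(\decvar')$, then reduce the minimization of $\sum_{j}(\lpval_{t_j}-\lpval_{t_{j+1}})\viparam_{t_j}(\decvar')$ to a shortest-path computation on a DAG indexed by $\permu$, solvable in $O(\card{\nodes}^2)$. The only minor difference is that the paper computes the $\viparam_\node(\decvar')$ values in $O(\card{\nodes})$ total via the recursion $\viparam_\node(\decvar') = \min\bigl(1,\ \viparam_{\parent(\node)}(\decvar') + \text{(branching term)}\bigr)$ exploiting the tree structure, whereas your direct summation costs $O(\card{\nodes}\cdot\card{\binaryset})$; this does not affect the polynomial-time claim.
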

\begin{proof}
 The separation problem for a given point~$\appdecvar$ requires us to find a sequence~$T=(t_2, \dots, t_k)$ that defines the most violated \STI.  This is equivalent to minimizing the right-hand side of Inequality~\eqref{eq:new-star-ineq}. 

  Given a point~$\appdecvar$, the values~$\viparam_\node (\appdecvar)$ are computed in linear time. We show how this is carried out. Let~$\node \in \nodes$ be an arbitrary node of the BB tree. We assume \Wlog that the last branching carried out to reach~$\node$ is~$\decvar_{\binindex (\node)} = 1$. Then, the following series of equalities hold,
  \begin{align*}
    \viparam_\node (\decvar) &= \min \bigg(1, \sum_{\binindex \in \onebranchset_\node} (1 - \decvar_\binindex) + \sum_{\binindex \in \zerobranchset_\node} \decvar_\binindex\bigg ), \\
                             &= \min \bigg(1, \sum_{\binindex \in \onebranchset_{\parent(\node)}} (1 - \decvar_\binindex) + (1 - \decvar_{\binindex (\node)}) + \sum_{\binindex \in \zerobranchset_{\parent(\node)}} \decvar_\binindex \bigg ), \\
                               & = \min \bigg(1, \viparam_{\parent(\node)} (\decvar) + (1 - \decvar_{\binindex (\node)}) \bigg ).
  \end{align*}
  Hence, the term~$\viparam_\node (\appdecvar)$ can be computed from the value of~$\viparam_{\parent(\node)} (\appdecvar)$ which associates to its parent node~$\parent(\node)$. A straightforward algorithm starts at the root node and iteratively computes~$\viparam_\node (\appdecvar)$ for the remaining child nodes until no more nodes are left. This algorithm needs a maximum of~$\order(\card{\nodes})$ operations. Then, the value of~$\viparam_{\node}$ implies if the minimum operator is replaced by~$1$ or~$\sum_{\binindex \in \onebranchset_\node} (1 - \decvar_\binindex) + \sum_{\binindex \in \zerobranchset_\node} \decvar_\binindex$ to obtain a linear inequality which answers the point made in Remark~\ref{rem:nonlinear}.

  With these pre-computed~$\viparam_\node(\appdecvar)$ values, the separation problem reduces to finding a sequence $T=(t_2, \dots, t_k)$ that maximizes
  \begin{equation*}
     \lpval_{t_1} - \sum_{j = 1}^{k} (\lpval_{t_j} - \lpval_{t_{j+1}}) \viparam_{t_j} (\appdecvar) - \objvect^\top \appdecvar.
  \end{equation*}
  This is equivalent to minimizing the sum~$\sum_{j=1}^{k} (\lpval_{t_j} - \lpval_{t_{j+1}}) \viparam_{t_j}(\appdecvar)$. This optimization problem can be modeled as a shortest-path problem on a directed acyclic graph, just like the original mixing inequalities~\cite{gunluk2001mixing, luedtke2010integer}. It runs in~$\order (\card{\nodes}^2)$ iterations.
\end{proof}

While a \CGLP is theoretically powerful tool, its practical application for separating inequalities requires constructing and solving an auxiliary linear program that can be prohibitively large. For example, the CGLP for the set~$\appsetLP_{\balasind, \lin}$ has~$\mathcal{O}(\card{\leaves}\decvarsize)$ constraints, which can make the separation problem much larger than the original \MBP. Modern solvers typically rely on specialized algorithms to generate cuts from specific families of inequalities. These algorithms often exploit the problem structure to generate cuts efficiently. This context emphasizes the practical value of families of inequalities, like the STIs, that admit tailored, polynomial-time separation algorithms. The trade-off between the theoretical strength of inequalities and the practical cost of their separation is a central theme that we revisit in the numerical analysis in Section~\ref{sec:numerical-analysis}.

\section{Numerical Analysis}
\label{sec:numerical-analysis}

Our numerical experiments are designed to achieve four primary objectives.
\begin{enumerate}
    \item First, we empirically validate the theoretical inclusion hierarchy established in Proposition~\ref{prop:balas-in-tree} and Corollary~\ref{cor:treeinms}, namely, that
        \begin{equation*}
            \proj_\decvar(\appset_{\balasind, \lin}) \subseteq \proj_\decvar(\appsetLP_{\treeind, \lin}) \subseteq \clos(\treeinf).
        \end{equation*}
    \item Second, we empirically compare the computational effort required for separation over each outer approximation. Let~$\Sep(S)$ denote the time required to separate over a set~$S$. We expect the separation time hierarchy to mirror the inclusion hierarchy, such that
        \begin{equation*}
            \Sep(\clos(\treeinf)) \leq \Sep(\proj_\decvar(\appsetLP_{\treeind, \lin})) \leq \Sep(\proj_\decvar(\appset_{\balasind, \lin})).
        \end{equation*}
      as discussed in Sections~\ref{sec:cut-separation},~\ref{sec:cut-generation-2}, and~\ref{sec:cut-gen-ms}.
    Moreover, such a comparison allows us to evaluate the tractability of each formulation and offers guidance on selecting an appropriate separation procedure.
    \item One of the practical motivations of this paper is the use of the outer approximation to generate non-trivial valid inequalities that may be used when solving a problem with a similar structure but different problem coefficients. We want to show that there is information that can be salvaged from a \BB tree and has value in a computational context. In addition, we want this information to be reused in an efficient manner, which is valuable for practical applications.
    \item Fourth, we investigate the influence of the \BB tree's structure on the quality of the outer approximation. Specifically, we test the hypothesis that larger or more detailed trees yield tighter approximations relative to the feasible set~$\feasset$. Moreover, we also aim to provide insights into how the considered tree influences separation time.
\end{enumerate}

To achieve these objectives, our experimental design is inspired by the \enquote{shooting} experiment of~\cite{kilincc2009approximating}, in which the authors measure polyhedral approximations of a stability region. Our approach consists of a two-phase process. In the first phase, we solve a \MBP instance, termed the \enquote{original instance}, and store the complete \BB tree that certifies its optimality. In the second phase, we use this \BB tree to generate valid inequalities for a \enquote{perturbed instance}. This perturbed instance is the same instance but with a perturbed set of objective coefficients.

We employ a pure cutting plane approach as used in, for example, \cite{balas2008optimizing,fischetti2007optimizing,cattaruzza2025multidimensional}. This iterative procedure involves repeatedly solving the continuous relaxation of the perturbed instance and separating the most violated valid inequality until no more cuts can be found. We repeat this experiment for five perturbed objective vectors for each original instance. Furthermore, to study the impact of tree size, we replicate these experiments using truncated versions of the original \BB tree.

\subsection{Problem Instances and Computational Environment}

Our numerical experiments are conducted on two classes of problems: multi-dimensional knapsack problems (\Mkp{}) and set-covering problems (\Scp{}). For each class, we use instances with $n =$ 10, 20, 40, and 60 variables, which have $n / 2$ constraints, respectively.
The instances were generated according to the procedure described in~\cite{chu1998genetic}, with further details provided in Appendix~\ref{sec:instance-generation}. All experiments were performed on a machine with two Intel(R) Xeon(R) Gold 6226 CPUs (12 cores each) and 384\,GB of RAM. Each experiment was executed on a single thread. We use CPLEX 22.1 as the linear programming solver, and all algorithms are implemented in Python. The source code and data for all experiments are publicly available at \url{https://gitlab.com/branchandboundtreeclosure/bnbtc}.

\subsection{Implementation Details.}

As previously mentioned, our evaluation method consists of two phases. We now provide more details for the execution of each.

\subsubsection{Branch-and-Bound on the Original Instance}
\label{sec:phase-1:-branch}

We execute a branch-and-bound algorithm to solve the original instance to optimality. For each node in the resulting search tree, we record its feasibility status and, if feasible, the objective value of its linear relaxation. Unfortunately, open-source and commercial MIP solvers do not reliably provide all the information required for our analysis. For instance, Gurobi does not provide any access to the tree structure. On the other hand, while CPLEX, Xpress, and SCIP provide branching information, they only give partial information about explored nodes. Specifically, when a node is pruned, it is not possible to distinguish whether the pruning is due to the node bound being sub-optimal or due to the node being infeasible. Therefore, we implemented a custom branch-and-bound algorithm.

To ensure that clear patterns can be identified, our implementation is intentionally minimalist. The two main algorithmic components are: (i)~node selection, which is performed using the best-first search, and (ii)~branching, which is performed using the most-fractional variable rule. That is, we branch on the variable that has the most fractional value in the solution of the current node's relaxation and has not been branched upon in the current path from the root to the node. Dual bounds are obtained by solving the continuous relaxation at each node. Valid inequalities are not added when solving the original instance.

\subsubsection{Cut Generation on the Perturbed Instance}
\label{sec:phase-2:-cut}
We apply a pure cutting plane approach to the perturbed instance. We compare the performance of valid inequalities generated from the three proposed outer approximations: the classical disjunctive approximation~$\proj_\decvar(\appset_{\balasind, \lin})$, the network-design reformulation~$\proj_\decvar(\appsetLP_{\treeind, \lin})$ obtained after reformulating and tightening the binary polynomial set, and the closure of the star-tree inequalities~$\clos (\treeinf)$.
As a baseline, we also consider a trivial outer approximation formed by the single objective cut~$\objvect^\top \decvar \geq \lpval_{\text{opt}}$, where~$\lpval_{\text{opt}}$ is the tightest valid dual bound available in the considered tree. The value of~$\lpval_{\text{opt}}$, equals the optimal objective value if the \BB tree certifies optimality for the original instance. This baseline, which we refer to as~\CutObj, demonstrates what is achievable without the proposed outer approximations.

The perturbed instances are created by modifying the objective coefficients of the original instance. Each coefficient~$\objvect_{\binindex}$ is perturbed by sampling from a normal distribution with mean~$\objvect_{\binindex}$ and standard deviation~$0.1 \cdot |\objvect_{\binindex}|$.
The cut generation process for each perturbed instance is subject to a strict 10-minute time limit. This limit includes all computations: constructing the \CGLP, solving the \CGLP, extracting and adding the new cut, and re-solving the tightened linear relaxation of the perturbed instance. The \CGLP for a given tree is constructed only once and is reused in all subsequent iterations. For~$\clos (\treeinf)$, we implement the separation procedure from Proposition~\ref{prop:separation}, which does not require a \CGLP. This entire process is repeated for five different perturbed versions of each original instance.
Furthermore, each experiment is replicated using truncated \BB trees. We truncate a tree by retaining only the nodes up to a certain depth~$d$, defined as
\begin{equation*}
  d = r_{\text{depth}} d_{\max},
\end{equation*}
where $d_{\max}$ is the maximum depth of the full tree and the depth ratio $r_{\text{depth}}$ is varied in~$\{0.25, 0.5, 0.75, 1.0\}$.

\subsection{Results.}

Tables~\ref{tab:main_results_mkp} and~\ref{tab:main_results_scp} summarize the results for the \Mkp{} and \Scp{} instances, respectively. Each table reports the following metrics: instance type (Instance), instance size (Size), the outer approximation used (Approx), the relative tree depth (Depth), the final dual bound tightness (Gap), the total time for the cutting plane procedure (Time), the number of instances (out of 5) that reach the cut generation time limit (Timeout) and the number of cuts generated (Cuts).

The dual bound tightness is measured by the relative gap with respect to the true optimal value of the perturbed instance,
\begin{equation*}
\text{Gap} = \frac{l_{\text{dual}} - l_{\text{opt}}}{l_{\text{opt}}},
\end{equation*}
where $l_{\text{dual}}$ is the dual bound obtained after the cutting plane procedure terminates and $l_{\text{opt}}$ is the optimal objective value of the perturbed instance. The reported values for the optimality gap, computation time, and number of cuts are averages computed over the five perturbed instances.

\begin{table}[hbtp]
\footnotesize
\caption{Results on \Mkp{} instances}
\label{tab:main_results_mkp}
\begin{tabular}{llllrrrr}
\toprule
Instance & Size & Approx & Depth & Gap & Time & Timeout & Cuts \\
\midrule
 &  &  & 0.50 & 7.12 \% & 0.00 & 0 & 1.0 \\
 &  &  & 0.75 & 6.18 \% & 0.00 & 0 & 1.0 \\
 &  &  & 1.00 & 2.33 \% & 0.00 & 0 & 1.0 \\
\cline{3-8}
 &  & \multirow[m]{4}{*}{\CutStar} & 0.25 & 7.87 \% & 0.00 & 0 & 1.2 \\
 &  &  & 0.50 & 6.86 \% & 0.00 & 0 & 1.8 \\
 &  &  & 0.75 & 5.73 \% & 0.01 & 0 & 5.2 \\
 &  &  & 1.00 & 1.70 \% & 0.01 & 0 & 4.6 \\
\cline{3-8}
 &  & \multirow[m]{4}{*}{\CutTree} & 0.25 & 7.85 \% & 0.00 & 0 & 1.0 \\
 &  &  & 0.50 & 6.83 \% & 0.00 & 0 & 2.4 \\
 &  &  & 0.75 & 5.52 \% & 0.01 & 0 & 10.2 \\
 &  &  & 1.00 & 1.29 \% & 0.01 & 0 & 9.0 \\
\cline{3-8}
 &  & \multirow[m]{4}{*}{\CutBalas} & 0.25 & 7.85 \% & 0.00 & 0 & 1.2 \\
 &  &  & 0.50 & 6.80 \% & 0.01 & 0 & 2.2 \\
 &  &  & 0.75 & 5.44 \% & 0.02 & 0 & 3.8 \\
 &  &  & 1.00 & 1.24 \% & 0.03 & 0 & 3.4 \\
\cline{2-8}
 & \multirow[m]{16}{*}{20} & \multirow[m]{4}{*}{\CutObj} & 0.25 & 2.96 \% & 0.00 & 0 & 1.0 \\
 &  &  & 0.50 & 2.73 \% & 0.00 & 0 & 1.0 \\
 &  &  & 0.75 & 2.48 \% & 0.00 & 0 & 1.0 \\
 &  &  & 1.00 & 1.25 \% & 0.00 & 0 & 1.0 \\
\cline{3-8}
 &  & \multirow[m]{4}{*}{\CutStar} & 0.25 & 2.84 \% & 0.00 & 0 & 2.4 \\
 &  &  & 0.50 & 2.52 \% & 0.02 & 0 & 3.4 \\
 &  &  & 0.75 & 2.25 \% & 0.08 & 0 & 6.6 \\
 &  &  & 1.00 & 1.20 \% & 0.07 & 0 & 4.6 \\
\cline{3-8}
 &  & \multirow[m]{4}{*}{\CutTree} & 0.25 & 2.83 \% & 0.00 & 0 & 1.8 \\
 &  &  & 0.50 & 2.42 \% & 0.02 & 0 & 10.0 \\
 &  &  & 0.75 & 2.05 \% & 0.08 & 0 & 24.0 \\
 &  &  & 1.00 & 1.15 \% & 0.07 & 0 & 16.6 \\
\cline{3-8}
 &  & \multirow[m]{4}{*}{\CutBalas} & 0.25 & 2.83 \% & 0.01 & 0 & 1.8 \\
 &  &  & 0.50 & 2.41 \% & 0.30 & 0 & 5.2 \\
 &  &  & 0.75 & 2.02 \% & 120.57 & 1 & 6366.2 \\
 &  &  & 1.00 & 1.12 \% & 121.55 & 1 & 6612.0 \\
\cline{2-8}
 & \multirow[m]{16}{*}{40} & \multirow[m]{4}{*}{\CutObj} & 0.25 & 1.56 \% & 0.00 & 0 & 0.8 \\
 &  &  & 0.50 & 1.36 \% & 0.00 & 0 & 1.0 \\
 &  &  & 0.75 & 1.26 \% & 0.00 & 0 & 1.0 \\
 &  &  & 1.00 & 1.02 \% & 0.00 & 0 & 1.0 \\
\cline{3-8}
 &  & \multirow[m]{4}{*}{\CutStar} & 0.25 & 1.54 \% & 0.07 & 0 & 3.0 \\
 &  &  & 0.50 & 1.31 \% & 1.56 & 0 & 4.6 \\
 &  &  & 0.75 & 1.17 \% & 3.11 & 0 & 4.8 \\
 &  &  & 1.00 & 1.02 \% & 1.85 & 0 & 2.6 \\
\cline{3-8}
 &  & \multirow[m]{4}{*}{\CutTree} & 0.25 & 1.50 \% & 0.02 & 0 & 4.2 \\
 &  &  & 0.50 & 1.14 \% & 120.72 & 1 & 4263.0 \\
 &  &  & 0.75 & 1.01 \% & 240.51 & 2 & 7587.0 \\
 &  &  & 1.00 & 1.00 \% & 120.77 & 1 & 2958.4 \\
\cline{3-8}
 &  & \multirow[m]{4}{*}{\CutBalas} & 0.25 & 1.50 \% & 2.50 & 0 & 2.8 \\
 &  &  & 0.50 & 1.17 \% & 484.82 & 4 & 3925.4 \\
 &  &  & 0.75 & 1.04 \% & 600.00 & 5 & 5970.2 \\
 &  &  & 1.00 & 1.01 \% & 510.38 & 4 & 3381.8 \\
\cline{2-8}
 & \multirow[m]{16}{*}{60} & \multirow[m]{4}{*}{\CutObj} & 0.25 & 1.05 \% & 0.00 & 0 & 1.0 \\
 &  &  & 0.50 & 0.93 \% & 0.00 & 0 & 1.0 \\
 &  &  & 0.75 & 0.84 \% & 0.00 & 0 & 1.0 \\
 &  &  & 1.00 & 0.76 \% & 0.00 & 0 & 1.0 \\
\cline{3-8}
 &  & \multirow[m]{4}{*}{\CutStar} & 0.25 & 1.00 \% & 10.41 & 0 & 2.8 \\
 &  &  & 0.50 & 0.88 \% & 43.97 & 0 & 4.4 \\
 &  &  & 0.75 & 0.80 \% & 35.51 & 0 & 3.2 \\
 &  &  & 1.00 & 0.76 \% & 23.47 & 0 & 2.4 \\
\cline{3-8}
 &  & \multirow[m]{4}{*}{\CutTree} & 0.25 & 0.92 \% & 241.18 & 2 & 4160.4 \\
 &  &  & 0.50 & 0.78 \% & 251.99 & 2 & 1935.8 \\
 &  &  & 0.75 & 0.76 \% & 251.36 & 2 & 1963.0 \\
 &  &  & 1.00 & 0.75 \% & 42.14 & 0 & 38.0 \\
\cline{3-8}
 &  & \multirow[m]{4}{*}{\CutBalas} & 0.25 & 1.14 \% & 480.03 & 4 & 0.6 \\
 &  &  & 0.50 & 1.14 \% & 600.00 & 5 & 2045.2 \\
 &  &  & 0.75 & 1.13 \% & 600.00 & 5 & 0.2 \\
 &  &  & 1.00 & 1.13 \% & 600.00 & 5 & 0.2 \\
\bottomrule
\end{tabular}
\end{table}

\begin{table}[hbtp]
\footnotesize
\caption{Results on \Scp{} instances}
\label{tab:main_results_scp}
\begin{tabular}{llllrrrr}
\toprule
Instance & Size & Approx & Depth & Gap & Time & Timeout & Cuts \\
\midrule
\multirow[m]{64}{*}{\Scp} & \multirow[m]{16}{*}{10} & \multirow[m]{4}{*}{\CutObj} & 0.25 & 10.34 \% & 0.00 & 0 & 0.0 \\
 &  &  & 0.50 & 8.07 \% & 0.00 & 0 & 0.2 \\
 &  &  & 0.75 & 8.07 \% & 0.00 & 0 & 0.2 \\
 &  &  & 1.00 & 3.66 \% & 0.00 & 0 & 0.6 \\
\cline{3-8}
 &  & \multirow[m]{4}{*}{\CutStar} & 0.25 & 10.34 \% & 0.00 & 0 & 0.0 \\
 &  &  & 0.50 & 8.05 \% & 0.00 & 0 & 0.2 \\
 &  &  & 0.75 & 8.05 \% & 0.00 & 0 & 0.2 \\
 &  &  & 1.00 & 2.74 \% & 0.00 & 0 & 0.6 \\
\cline{3-8}
 &  & \multirow[m]{4}{*}{\CutTree} & 0.25 & 10.34 \% & 0.00 & 0 & 0.0 \\
 &  &  & 0.50 & 8.05 \% & 0.00 & 0 & 0.2 \\
 &  &  & 0.75 & 8.05 \% & 0.00 & 0 & 0.2 \\
 &  &  & 1.00 & 2.74 \% & 0.00 & 0 & 0.6 \\
\cline{3-8}
 &  & \multirow[m]{4}{*}{\CutBalas} & 0.25 & 10.34 \% & 0.00 & 0 & 0.0 \\
 &  &  & 0.50 & 8.05 \% & 0.00 & 0 & 0.2 \\
 &  &  & 0.75 & 8.05 \% & 0.00 & 0 & 0.2 \\
 &  &  & 1.00 & 2.74 \% & 0.00 & 0 & 0.6 \\
\cline{2-8}
 & \multirow[m]{16}{*}{20} & \multirow[m]{4}{*}{\CutObj} & 0.25 & 12.09 \% & 0.00 & 0 & 0.2 \\
 &  &  & 0.50 & 10.92 \% & 0.00 & 0 & 0.6 \\
 &  &  & 0.75 & 10.26 \% & 0.00 & 0 & 0.8 \\
 &  &  & 1.00 & 7.45 \% & 0.00 & 0 & 1.0 \\
\cline{3-8}
 &  & \multirow[m]{4}{*}{\CutStar} & 0.25 & 11.57 \% & 0.00 & 0 & 0.2 \\
 &  &  & 0.50 & 9.82 \% & 0.00 & 0 & 0.8 \\
 &  &  & 0.75 & 9.35 \% & 0.00 & 0 & 1.2 \\
 &  &  & 1.00 & 5.62 \% & 0.00 & 0 & 1.6 \\
\cline{3-8}
 &  & \multirow[m]{4}{*}{\CutTree} & 0.25 & 11.57 \% & 0.00 & 0 & 0.2 \\
 &  &  & 0.50 & 9.82 \% & 0.00 & 0 & 1.0 \\
 &  &  & 0.75 & 9.16 \% & 0.00 & 0 & 1.2 \\
 &  &  & 1.00 & 4.88 \% & 0.00 & 0 & 1.8 \\
\cline{3-8}
 &  & \multirow[m]{4}{*}{\CutBalas} & 0.25 & 11.57 \% & 0.00 & 0 & 0.2 \\
 &  &  & 0.50 & 9.82 \% & 0.00 & 0 & 0.8 \\
 &  &  & 0.75 & 9.16 \% & 0.00 & 0 & 1.0 \\
 &  &  & 1.00 & 4.88 \% & 0.00 & 0 & 1.6 \\
\cline{2-8}
 & \multirow[m]{16}{*}{40} & \multirow[m]{4}{*}{\CutObj} & 0.25 & 14.18 \% & 0.00 & 0 & 0.6 \\
 &  &  & 0.50 & 11.78 \% & 0.00 & 0 & 0.8 \\
 &  &  & 0.75 & 8.89 \% & 0.00 & 0 & 0.8 \\
 &  &  & 1.00 & 5.02 \% & 0.00 & 0 & 1.0 \\
\cline{3-8}
 &  & \multirow[m]{4}{*}{\CutStar} & 0.25 & 13.71 \% & 0.00 & 0 & 1.0 \\
 &  &  & 0.50 & 10.85 \% & 0.00 & 0 & 2.6 \\
 &  &  & 0.75 & 8.07 \% & 0.01 & 0 & 4.4 \\
 &  &  & 1.00 & 3.77 \% & 0.00 & 0 & 1.8 \\
\cline{3-8}
 &  & \multirow[m]{4}{*}{\CutTree} & 0.25 & 13.62 \% & 0.00 & 0 & 1.0 \\
 &  &  & 0.50 & 10.55 \% & 0.00 & 0 & 2.0 \\
 &  &  & 0.75 & 7.38 \% & 0.01 & 0 & 3.2 \\
 &  &  & 1.00 & 3.58 \% & 0.01 & 0 & 3.0 \\
\cline{3-8}
 &  & \multirow[m]{4}{*}{\CutBalas} & 0.25 & 13.62 \% & 0.00 & 0 & 1.0 \\
 &  &  & 0.50 & 10.55 \% & 0.02 & 0 & 2.0 \\
 &  &  & 0.75 & 7.18 \% & 0.05 & 0 & 4.0 \\
 &  &  & 1.00 & 3.44 \% & 0.05 & 0 & 3.2 \\
\cline{2-8}
 & \multirow[m]{16}{*}{60} & \multirow[m]{4}{*}{\CutObj} & 0.25 & 24.94 \% & 0.00 & 0 & 1.0 \\
 &  &  & 0.50 & 21.46 \% & 0.00 & 0 & 1.0 \\
 &  &  & 0.75 & 16.75 \% & 0.00 & 0 & 1.0 \\
 &  &  & 1.00 & 13.16 \% & 0.00 & 0 & 1.0 \\
\cline{3-8}
 &  & \multirow[m]{4}{*}{\CutStar} & 0.25 & 23.65 \% & 0.00 & 0 & 2.2 \\
 &  &  & 0.50 & 20.20 \% & 0.01 & 0 & 4.4 \\
 &  &  & 0.75 & 15.88 \% & 0.01 & 0 & 4.6 \\
 &  &  & 1.00 & 12.75 \% & 0.01 & 0 & 1.8 \\
\cline{3-8}
 &  & \multirow[m]{4}{*}{\CutTree} & 0.25 & 23.39 \% & 0.00 & 0 & 1.6 \\
 &  &  & 0.50 & 19.55 \% & 0.01 & 0 & 6.2 \\
 &  &  & 0.75 & 15.40 \% & 0.01 & 0 & 5.6 \\
 &  &  & 1.00 & 12.64 \% & 0.01 & 0 & 3.4 \\
\cline{3-8}
 &  & \multirow[m]{4}{*}{\CutBalas} & 0.25 & 23.39 \% & 0.01 & 0 & 1.8 \\
 &  &  & 0.50 & 19.43 \% & 120.06 & 1 & 3933.0 \\
 &  &  & 0.75 & 15.19 \% & 0.18 & 0 & 6.4 \\
 &  &  & 1.00 & 12.63 \% & 0.13 & 0 & 3.0 \\
\bottomrule
\end{tabular}
\end{table}


The results in Tables~\ref{tab:main_results_mkp} and~\ref{tab:main_results_scp} are consistent with the inclusion hierarchy established previously. For instances where no cutting plane method timed out, we observe that~\CutObj{},~\CutStar{},~\CutTree{}, and~\CutBalas{} yield progressively tighter relative gaps, which confirms our first objective.

Second, the separation times mirror the inverse of the inclusion hierarchy. On instances where no method timed out, the total separation times for~\CutObj{},~\CutStar{},~\CutTree{}, and~\CutBalas{} are progressively longer for the same tree depth. Also, when~\CutBalas{} is used to approximate large problems, more time-outs occur due to the high computational cost associated. This confirms our second objective.
We note that while separating the \STI \xspace can be time-consuming, as is the case for \Mkp{} 60, for instance, the separation algorithm used in the experiments is a straightforward Python implementation. Significant speed-ups can be achieved by implementing this procedure in a compiled language such as C++.

Regarding our third objective, in the context of reoptimization, the proposed outer approximations effectively reduce the optimality gap. While the improvement over the \CutObj{} method is marginal in a few cases (e.g., for \Scp{} 10 with a tree depth of 0.25), the approximations always perform better, often reducing the gap by a quarter and in some cases by half. It is difficult to conclude whether the methods are more effective for \Mkp{} or \Scp{} instances, as performance depends on multiple factors. We remark that, although we have shown that there is potential to implement the proposed cutting planes in a reoptimization fashion, carrying this out in practice is a very complicated task that requires deep knowledge of modern optimization solvers, substantial engineering effort, and extensive testing. Therefore, we believe this to be beyond the scope of this paper and deserving of its own research endeavor. Instead, our work offers a good starting point for identifying which outer approximation should be used in specific reoptimization contexts.

Finally, we address our fourth objective concerning the impact of tree size. A clear trend is observed: outer approximations constructed using deeper trees yield smaller optimality gaps. Interestingly,  using larger trees does not uniformly increase the total separation time. For instance, with the \Mkp{}~20 instance, separation using the full-depth tree (depth ratio 1.0) is faster than using a tree with a depth ratio of 0.75. This highlights that tree selection is a critical factor influencing the computational tractability of the approach. A more striking example is the \Scp{}~60 instance with~\CutBalas{}, where the total separation time drops dramatically from~120 to~0.13 seconds when increasing the tree depth ratio from 0.50 to 1.00. This suggests that larger, more informative trees can sometimes lead to deeper cuts such that the overall separation process becomes faster.




\section{Conclusion}
\label{sec:conclusion}

This paper introduced three a-posteriori outer approximations of the feasible region of mixed-binary programs derived from a branch-and-bound~(\BB) tree, together with a new family of valid inequalities, the Star Tree Inequalities~(\STI s). We established an inclusion hierarchy among the approximations (disjunctive programming-based, branching-based, and mixing-set based), as well as a reverse separation time hierarchy. This hierarchy theoretically formalizes the trade-off between tightness and computational tractability of the proposed approaches. The \STI s, while theoretically weakest in closure, stand out practically due to a polynomial-time combinatorial separation algorithm. Our computational study on multi-dimensional knapsack and set-covering problems empirically confirms the hierarchy and demonstrates that cuts generated a posteriori from \BB trees can reduce optimality gaps in perturbed instances.

These results support the idea that \BB trees contain structural information about the underlying optimization problem that is valuable beyond a single solve. Reusing this information can be leveraged in multiple settings, such as reoptimization, restarts, decomposition, or sensitivity analyses of problem parameters. More generally, this paper highlights a novel perspective on data-driven optimization: rather than treating each solve as a one-shot experiment, one can view the entire process as a sequence where information accumulates and can be systematically reused. From this perspective, a central trade-off emerges between exploration and exploitation. Building richer trees and tighter approximations might require more time at first, but may provide substantial speed-ups later on as the available \BB information is exploited to generate strong cuts. This dilemma motivates research on new design principles for optimization solvers.


\section*{Acknowledgments}
\label{sec:acknowledgements}

The support of SCALE-AI through its Research Chairs program, IVADO, the Canada First Research Excellence Fund (Apog\'ee/CFREF), as well as the computational infrastructure provided by Compute Canada are gratefully acknowledged.

\section{Conflict of interest}

The authors declare that they have no conflict of interest that could have influenced the work reported in this paper.


\printbibliography

\appendix

\section{Instance Generation}
\label{sec:instance-generation}

This section describes the procedure used to generate the test instances for the numerical experiments presented in the main text.

\textbf{Multi-Knaspack Problem:}
Given a number of variables $n$, we construct instances of the multi-knapsack problem in the following form:
\[
\max \{c^T x : A x \le b \},
\]
where $c \in \mathbb{R}^{n}$, $b \in \mathbb{R}^{n / 2}$ and $A \in \mathbb{R}^{(n / 2) \times n}$.
The instance data is generated as follows:
\begin{itemize}
\item 
Each entry of the cost vector $c$ is drawn independently from the uniform distribution $\mathcal{U}(1, 2)$.
\item 
Each entry of the matrix $A$ is drawn independently from $\mathcal{U}(0, 1)$.
\item 
Each entry of $b$ is defined as
\[
b_{i} = 0.9 \sum_{j = 1}^{n} A_{i j}, \quad i = 1, \ldots, n/2.
\]
\end{itemize}

\textbf{Set-Covering Problem:} Given a number of variables \(n\), representing the number of subsets, and a number of constraints \(m\), representing the elements of the ground set, we consider a density parameter \(q \in (0,1)\) that specifies the probability that a given element belongs to a subset. An instance of the Set-Covering Problem can then be generated as  
\[
\max \{c^{T}x \colon A x \geq 1\},
\]  
where \(c \in \mathbb{R}^{n}\) and \(A \in \{0,1\}^{m \times n}\). Each entry of \(A\) takes the value \(1\) independently with probability \(q\), and the cost vector \(c\) is drawn independently from the uniform distribution \(\mathcal{U}(1,2)\).

To simulate a change in the objective function, we generate a perturbed instance from each original instance by modifying the cost vector.
Let $c$ be the cost vector of the original instance.
The perturbed cost vector $\widetilde{c}$ is defined as:
\[
\widetilde{c} = c + \varepsilon,
\]
where each component $\varepsilon_{i}$ is sampled independently from the normal distribution $\mathcal{N}(0, 0.1 c_{i})$, for $i = 1, \ldots, n$.

\end{document}